\documentclass[11pt]{amsart}
\usepackage{amsfonts}
\usepackage{latexsym,amsfonts,amssymb}
\usepackage{graphicx}
\usepackage{xcolor}
\usepackage[noadjust]{cite}
\usepackage{amsmath}
\usepackage[hidelinks]{hyperref}
\newtheorem{theorem}{Theorem}[section]
\newtheorem{lemma}[theorem]{Lemma}
\newtheorem{corollary}[theorem]{Corollary}

\theoremstyle{definition}

\newtheorem{proposition}[theorem]{Proposition}
\theoremstyle{remark}
\newtheorem{remark}[theorem]{Remark}

\numberwithin{equation}{section}

\allowdisplaybreaks[3]
\usepackage[top=28mm,bottom=25mm,left=25mm,right=25mm]{geometry}
\begin{document}

\title{Biharmonic Lagrangian surfaces with constant Gaussian curvature in $2$-dimensional complex space forms}

\author{Yu Fu}
\address[Y. Fu]{School of Data Science and Artificial Intelligence, Dongbei University of Finance and Economics,
    Dalian 116025, China}
\email{mathyufu@163.com}

\author{Yanru Luo}
\address[Y. Luo]{School of Economics, Dongbei University of Finance and Economics,
Dalian 116025, China}
\email{luoyanru0819@163.com}

\author{Cezar Oniciuc}
\address[C. Oniciuc]{Faculty of Mathematics, Al.I. Cuza University of Iasi, Blvd. Carol I no. 11,
Iasi 700506, Romania}
\email{oniciucc@uaic.ro}


\subjclass[2020]{53B30, 53C40, 53C42} \keywords{Biharmonic maps; Lagrangian surfaces; Gaussian curvature;
Flat normal bundles}


\begin{abstract}
Constant curvature is one of the most fundamental intrinsic conditions in submanifold geometry. Investigating the existence and classification of biharmonic submanifolds under intrinsic curvature constraints is generally difficult, since the biharmonic equation is expressed in terms of extrinsic data. In this paper, we obtain a complete classification of proper, that is non-minimal, biharmonic Lagrangian surfaces with constant Gaussian curvature in 2-dimensional complex space forms. As a consequence, we classify the proper biharmonic Lagrangian surfaces with flat normal bundle and prove that pseudo-umbilical biharmonic Lagrangian surfaces are necessarily minimal.
\end{abstract}

\maketitle \markboth  {Fu et al.}{ Biharmonic Lagrangian surfaces in complex space forms}


\section{Introduction}

The theory of biharmonic maps is a natural and important generalization
of harmonic map theory in differential geometry. Harmonic maps are
critical points of the Dirichlet energy functional, whereas biharmonic
maps are critical points of the bienergy functional introduced by Eells and Sampson~\cite{Eells-Sampson1964, Eells-Sampson1966} (see also Eells
and Lemaire \cite{Eells-Lemaire1983}). For all smooth maps
$\phi\colon (M^m,g)\to\left(\widetilde{M}^n,\widetilde{g}\right)$ between two Riemannian
manifolds, the bienergy functional is defined by
\begin{equation*}
E_{2}\left(\phi\right)=\frac{1}{2}\int_{M}|\tau(\phi)|^{2}\,\mathrm{d}v_{g},
\end{equation*}
where $\tau(\phi)$ denotes the tension field of $\phi$. The Euler-Lagrange equation associated with $E_2$, known as the biharmonic
equation, was derived by Jiang ~\cite{Jiang1986-1,Jiang1986-2}, and it is given by
\begin{equation*}
\tau_{2}(\phi):=\sum_{i=1}^{m}\left\{\left(\nabla^{\phi}_{e_i}\nabla^{\phi}_{e_i}
-\nabla^{\phi}_{\nabla_{e_i}e_i}\right)\tau(\phi)
+\widetilde{R}\left(\tau(\phi),\mathrm{d}\phi(e_i)\right)\mathrm{d}\phi(e_i)\right\}=0,
\end{equation*}
where $\tau_{2}(\phi)$ is the bitension field of $\phi$,
$\{e_1,\ldots,e_m\}$ is a local orthonormal frame field on $M$ and
$\widetilde{R}$ is the curvature tensor field of $\widetilde{M}$.
The biharmonic equation is a fourth-order semilinear elliptic equation,
and every harmonic map is trivially biharmonic. Therefore, the main
interest lies in biharmonic maps that are not harmonic, which are called
proper biharmonic maps.

When $\phi$ is an isometric immersion, that is when $M$ is a submanifold
of $\widetilde{M}$, the biharmonic equation splits into its tangential
and normal components, in which the mean curvature vector field of $M$
and the curvature tensor field of the ambient manifold play a key role.
Since an isometric immersion is harmonic if and only if it is minimal, a
central problem in the theory of biharmonic submanifolds is the
classification of proper, that is non-minimal, biharmonic submanifolds.

The existence and classification of proper biharmonic submanifolds
depend strongly on the curvature of the ambient space and on the
additional geometric conditions imposed on the submanifold. A
long-standing problem in this direction is Chen's conjecture
\cite{Chen1991}, which asserts that every biharmonic submanifold of a
Euclidean space is minimal. Although the conjecture remains open in full
generality, it has been confirmed for several important classes of
submanifolds, including hypersurfaces in Euclidean spaces and submanifolds satisfying suitable curvature conditions;
see, for example, \cite{Fu-Hong-Zhan2021,Fu-Hong-Zhan2023,Fu-Hong-Yang2026, Maeta2026}.

In contrast, proper biharmonic submanifolds do exist in ambient spaces
of positive curvature, most notably in Euclidean spheres, where many
examples and classification results have been obtained. A remarkable
feature is that most of the known examples of proper biharmonic
submanifolds have constant mean curvature (CMC). This observation led to
the Balmu\c{s}--Montaldo--Oniciuc (BMO) conjecture \cite{BMO2008}, which
originally states that every proper biharmonic submanifold of a
Euclidean sphere is CMC, and which can be naturally
formulated in any space form. Chen's conjecture, the BMO conjecture and
other open problems raised in recent decades have played a central role
in the development of proper biharmonic submanifold theory; see, for example,
\cite{Fetcu-Oniciuc2022,Caddeo-Montaldo-Oniciuc2001,Caddeo-Montaldo-Oniciuc2002}.

Interesting results have also been obtained in ambient spaces of
nonconstant curvature. In particular, Lagrangian submanifolds of
complex space forms provide a rich framework in which to study
biharmonicity. Let $\widetilde{M}^{n}(4\epsilon)$ be a complex space
form of constant holomorphic sectional curvature $4\epsilon$. An
$n$-dimensional submanifold $M^n$ of $\widetilde{M}^{n}(4\epsilon)$ is
called Lagrangian if $J(T_pM)=T_p^{\perp}M$ for every $p\in M$, where
$J$ is the complex structure of $\widetilde{M}^{n}(4\epsilon)$. Thus, a
Lagrangian submanifold is a maximal totally real submanifold, and its
geometry reflects a close interaction between the Riemannian and complex
structures of the ambient manifold \cite{Chen2001}. The Lagrangian condition imposes strong
symmetry properties on the second fundamental form of the submanifold but, on the other hand,
it still allows for a rich variety of non-trivial extrinsic geometries.

In the case of Lagrangian surfaces, Sasahara~\cite{Sasahara2007,Sasahara2009} classified
all CMC proper biharmonic and CMC biminimal Lagrangian surfaces, showing in particular that they are rather rigid.
Maeta and Urakawa ~\cite{Maeta2013} classified proper biharmonic Lagrangian surfaces
with parallel normalized mean curvature vector field in complex space
forms. Very recently, Hamiltonian-stationary proper biharmonic Lagrangian
surfaces and proper biharmonic $H$-umbilical Lagrangian
submanifolds in complex space forms were  studied in~\cite{Luo-wang2026}  and \cite{Sasahara2026}, respectively.
Related results on proper biharmonic integral $C$-parallel submanifolds of
$7$-dimensional Sasakian space forms can be found
in~\cite{Fetcu-Oniciuc2012,Sasahara2019}. Further examples and results of proper
biharmonic submanifolds in complex space forms were obtained in~\cite{Fetcu2010, Fetcu-Pinheiro2015, Zhangwei,Bibi2023}.

Despite these advances, the classification of proper biharmonic Lagrangian
surfaces under intrinsic curvature conditions remains largely open.
For surfaces, the Gaussian curvature is the most fundamental intrinsic
invariant, and requiring it to be constant is a natural rigidity
condition. However, the resulting classification problem is difficult,
since the biharmonic equation is expressed in terms of extrinsic
quantities, such as the mean curvature vector field and the second
fundamental form. Since the mean curvature is not assumed to be constant, the biharmonic equation is considerably more difficult to analyze. 
To overcome this difficulty, in our paper we combine the biharmonic
equation with the Gauss and Codazzi
equations of the Lagrangian surface. This yields a system of partial
differential equations involving the mean curvature, the Gaussian
curvature and the connection forms. By a case-by-case analysis
of this system, we prove that any proper biharmonic Lagrangian surface
with constant Gaussian curvature in a complex space form of complex
dimension $2$ has constant mean curvature. Combining this with
Sasahara's classification of  CMC proper biharmonic Lagrangian surfaces ~\cite{Sasahara2007}, we obtain the following
classification result.

\begin{theorem}\label
    {T2}
    Let $M^2$ be a proper biharmonic Lagrangian surface in a
    $2$-dimensional complex space form
    $\widetilde M^2(4\epsilon)$, where
    $\epsilon\in\{-1,0,1\}$. If $M^2$ has constant Gaussian curvature $K$,
    then $K=0$ and $\epsilon=1$.
    Moreover, $M^2$ is an open part of one of the following two Lagrangian tori
    \begin{align}\label{FL}
    \pi\left(\mathbb{S}^1\left(\sqrt{\frac{9\pm\sqrt{41}}{20}}\right)
        \times \mathbb{S}^1\left(\sqrt{\frac{11\mp\sqrt{41}}{40}}\right)
        \times \mathbb{S}^1\left(\sqrt{\frac{11\mp\sqrt{41}}{40}}\right)
        \right),
        \end{align}
        where
        $\pi:\mathbb{S}^5(1)\to\mathbb CP^2(4)$
        is the Hopf fibration.
        \end
        {theorem}
        
We note that proper biharmonic surfaces with constant Gaussian curvature and constant mean curvature in Euclidean spheres were studied in~\cite{Loubeau-Oniciuc2016}; in contrast, here the constancy of the mean curvature is not assumed but obtained, and the curvature of the ambient complex space form introduces terms involving $J$ that have no counterpart in the spherical case.

Theorem~\ref{T2} is quite a rigid result but, interestingly, it parallels the case of minimal, non-totally geodesic Lagrangian surfaces with constant Gaussian curvature in $\widetilde{M}^2(4\epsilon)$. Indeed, it was proved in~\cite{Chen2005, Chen-Ogiue1974, Ejiri1982} that such surfaces exist only for $\epsilon > 0$, in which case they are open parts of a certain flat Lagrangian torus, which is unique up to holomorphic isometries of the ambient manifold.        

As an immediate consequence of Theorem~\ref{T2}, we obtain the following result which provides a
further partial affirmative answer to Chen's conjecture in codimension~$2$
under intrinsic curvature assumptions.

        \begin{corollary}
        Every biharmonic Lagrangian surface of constant Gaussian curvature in
        the complex plane $\mathbb C^2$ is minimal.
        \end{corollary}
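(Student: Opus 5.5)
The corollary follows directly from Theorem~\ref{T2}, so the plan is a short argument by contradiction. The complex plane $\mathbb C^2$, with its flat Kähler metric, is the $2$-dimensional complex space form $\widetilde M^2(4\epsilon)$ with $\epsilon=0$. Let $M^2$ be a biharmonic Lagrangian surface in $\mathbb C^2$ with constant Gaussian curvature $K$, and suppose it is not minimal.

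The first step is to localize. The mean curvature vector $H$ is smooth, so the set $U=\{p\in M^2 : H(p)\neq 0\}$ is open. If $M^2$ is not minimal, then $U$ is nonempty. Every connected component of $U$ is a Lagrangian surface in $\mathbb C^2$, since it is an open subset of $M^2$. It is biharmonic, because biharmonicity is a local condition: the bitension field vanishes pointwise. It has constant Gaussian curvature $K$, and it is non-minimal at every point. Hence it is a proper biharmonic Lagrangian surface with constant Gaussian curvature in $\widetilde M^2(0)$.

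The second step is to apply Theorem~\ref{T2} to such a component. The theorem forces $\epsilon=1$, which contradicts $\epsilon=0$. Therefore $U$ is empty, $H\equiv 0$, and $M^2$ is minimal.

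The only delicate point is whether ``proper'' in Theorem~\ref{T2} means non-minimal on some open set or nowhere minimal. Restricting to the open set $U$, where $H\neq0$ everywhere, handles both readings, so I expect no real obstacle.
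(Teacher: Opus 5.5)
Your argument is correct and follows exactly the paper's route: the corollary is the case $\epsilon=0$ of Theorem~\ref{T2}, which forces $\epsilon=1$ for any proper example. Restricting to the open set where $H\neq 0$ is harmless extra care, since the proof of Theorem~\ref{T2} only needs $H\not\equiv 0$ on a connected surface.
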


Biharmonic Weingarten surfaces with flat normal bundle in $4$-dimensional space
forms were studied in \cite{Andronic-Montaldo-Oniciuc-Sanna2025}. In the present paper, we obtain a complete classification of proper biharmonic
Lagrangian surfaces with flat normal bundle in $2$-dimensional complex space
forms, without any additional assumptions. This follows from Theorem~\ref{T2},
since every Lagrangian surface with flat normal bundle is flat.

       \begin{corollary}
                Let $M^2$
                be a proper biharmonic Lagrangian surface with flat normal
                bundle in a $2$-dimensional complex space form
                $\widetilde M^2(4\epsilon)$, where $\epsilon\in\{-1,0,1\}$. Then $M^2$
                is an open
                part of one of the Lagrangian tori given by
                \eqref{FL}.
                \end{corollary}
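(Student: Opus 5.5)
The corollary follows from Theorem~\ref{T2} once we show that a Lagrangian surface with flat normal bundle in $\widetilde M^2(4\epsilon)$ is flat, i.e.\ $K\equiv 0$. Then $K$ is constant, Theorem~\ref{T2} applies, and it forces $\epsilon=1$ and $M^2$ to be an open part of one of the tori \eqref{FL}. So the plan is to prove the identity $R^\perp(X,Y)J Z = J R(X,Y)Z$ for Lagrangian submanifolds and specialise it to surfaces.

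For the identity, recall that $J$ is parallel on a Kähler manifold. The Gauss and Weingarten formulas $\widetilde\nabla_X Y=\nabla_X Y+h(X,Y)$ and $\widetilde\nabla_X\xi=-A_\xi X+\nabla^\perp_X\xi$ then give
\[
J\nabla_XY+Jh(X,Y)=-A_{JY}X+\nabla^\perp_X JY .
\]
Since $M$ is Lagrangian, $J$ maps $TM$ onto $T^\perp M$ and $T^\perp M$ onto $TM$. Comparing normal parts gives $\nabla^\perp_X JY=J\nabla_X Y$. Comparing tangential parts gives $A_{JY}X=-Jh(X,Y)$, which is the well-known total symmetry of $\langle h(X,Y),JZ\rangle$.

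The first relation says that $J\colon TM\to T^\perp M$ intertwines $\nabla$ and $\nabla^\perp$. Hence
\[
R^\perp(X,Y)JZ=J R(X,Y)Z .
\]

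Now let $M^2$ be a surface with a local orthonormal frame $\{e_1,e_2\}$. Then
\[
\langle R^\perp(e_1,e_2)Je_2,Je_1\rangle=\langle R(e_1,e_2)e_2,e_1\rangle=K,
\]
because $J$ is an isometry. Flatness of the normal bundle therefore gives $K=0$ identically.

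The only remaining point is to check that the tori in \eqref{FL} really do have flat normal bundle, so that the conclusion is sharp. This is immediate from the same identity, since those tori are flat. I expect no real obstacle here: all the difficulty is contained in Theorem~\ref{T2}, and this step only needs the standard sign conventions to match.
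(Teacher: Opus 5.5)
Your proposal is correct and matches the paper's argument: the paper obtains the corollary from Theorem~\ref{T2} by simply observing that every Lagrangian surface with flat normal bundle is flat. Your identity $R^{\perp}(X,Y)JZ=JR(X,Y)Z$, which follows from \eqref{F1}, supplies the standard justification of this fact that the paper leaves implicit; the closing remark on sharpness is a harmless extra and is not needed for the statement.
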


                \begin{remark}
                The proper biharmonic Lagrangian tori of \eqref {FL} are not
                pseudo-umbilical. They have constant mean curvature satisfying
                $|H|^
                2=
                \frac{13\pm\sqrt
                    {41}}{8}
                \in\left(0,\frac{5}{2}\right],$
                which agrees with the characterization given in Proposition~2.4
                of~
                \cite
                {Fetcu2010}. Also, those tori have parallel mean curvature vector field $H$ and the shape operator $A_H$ associated to $H$ is parallel as well. Moreover, their corresponding
                $(-4)$-biharmonic integral $C$-parallel surfaces in
                $\mathbb{S}^5(1)\subset\mathbb C^3$
                are locally parametrized by
                $\phi_\pm:\mathbb R^2\to \mathbb{S}^5(1)$, where
\begin{align}
\phi_{\pm}(u,v)=\left(\sqrt{\frac{9\pm\sqrt{41}}{20}}\,\exp\left(-\mathrm{i}\frac{11\mp\sqrt{41}}{2\left(9\pm\sqrt{41}\right)}(u+v)\right),
                \sqrt{\frac{11\mp\sqrt{41}}{40}}\,\exp(
                {\mathrm{i}u}),\right. \label{IntegralCParallelSurface}\\
                \left.\sqrt{\frac{11\mp\sqrt{41}}{40}}\,\exp({\mathrm{i}v})\right). \notag
                \end{align}
                \end{remark}
It is straightforward to verify that if a pseudo-umbilical biharmonic Lagrangian
surface in $\widetilde M^2(4\epsilon)$ is non-minimal, then its Gaussian
curvature would be $K=\epsilon$. On the other hand, by Theorem~\ref{T2}, such a
surface would then have to be one of the two tori described there, which are not
pseudo-umbilical, a contradiction. We therefore obtain the following.

                \begin{corollary}
                    Any pseudo-umbilical biharmonic Lagrangian surface in a
                    $2$-dimensional complex space form
                    $\widetilde M^2(4\epsilon)$
                    is
                    minimal.
                    \end
                    {corollary}

\begin{remark}
In fact, since any pseudo-umbilical biharmonic submanifold $M^m$, with $m\neq 4$, of an
arbitrary Riemannian manifold $\widetilde{M}^n$ is CMC (see
\cite[Remark~4.10]{Loubeau-Montaldo-Oniciuc2008}), the above result follows
directly from Sasahara's result~\cite{Sasahara2007}.
\end{remark}

The paper is organized as follows. In Section~2, we recall some basic facts
about Lagrangian submanifolds in complex space forms and derive the
fundamental equations of proper biharmonic Lagrangian surfaces. We also
establish several preliminary results including, for the sake of completeness, the proof of the
constant mean curvature case, that is Sasahara's result. Section~3 contains the proof of the main
result of the paper. Section~4 is devoted to determining the full period lattices
of the Lagrangian tori~\eqref{FL}.

\section{Some basic results and the PDEs of proper biharmonic Lagrangian surfaces}

Let $\widetilde{M}^{n}(4\epsilon)$ be a complex space form of complex
dimension $n$ with constant holomorphic sectional curvature $4\epsilon$.
The curvature tensor $\widetilde{R}$ of $\widetilde{M}^{n}(4\epsilon)$
is given by
\begin{align*}
	\widetilde{R}(\widetilde{X},\widetilde{Y})\widetilde{Z} =& \widetilde{\nabla}_{\widetilde{X}} \widetilde{\nabla}_{\widetilde{Y}} \widetilde{Z} - \widetilde{\nabla}_{\widetilde{Y}} \widetilde{\nabla}_{\widetilde{X}} \widetilde{Z} - \widetilde{\nabla}_{\left[\widetilde{X}, \widetilde{Y}\right]} \widetilde{Z} \\
	=& \epsilon\Big\{
	\langle \widetilde{Y},\widetilde{Z}\rangle \widetilde{X}
	- \langle \widetilde{X},\widetilde{Z}\rangle \widetilde{Y}
	+ \langle \widetilde{Z},J\widetilde{Y}\rangle J\widetilde{X}
	- \langle \widetilde{Z},J\widetilde{X}\rangle J\widetilde{Y}
	+ 2\langle \widetilde{X},J\widetilde{Y}\rangle J\widetilde{Z} \Big\},
\end{align*}
where $\widetilde{X}$, $\widetilde{Y}$ and $\widetilde{Z}$ are vector fields tangent to the K\"ahler manifold, $\langle\cdot,\cdot\rangle$ denotes the metric and $J$ the
complex structure of $\widetilde{M}^{n}(4\epsilon)$. Every complete,
simply connected complex space form is holomorphically isometric to the
complex Euclidean space $\mathbb{C}^{n}$, the complex projective space
$\mathbb{C}P^{n}(4\epsilon)$, or the complex hyperbolic space
$\mathbb{C}H^{n}(4\epsilon)$, according as $\epsilon = 0$,
$\epsilon > 0$, or $\epsilon < 0$, respectively. After rescaling the
metric, we may assume that $\epsilon \in \{-1, 0, 1\}$.

A connected submanifold of a K\"ahler manifold is called Lagrangian if the
complex structure $J$ maps each tangent space onto the corresponding
normal space.
Let \(M^{n}\) be a Lagrangian submanifold of complex space form \(\widetilde{M}^{n}(4 \epsilon)\). We denote by $\nabla$ and \(\widetilde{\nabla}\) the Levi-Civita connections on \(M^{n}\) and \(\widetilde{M}^{n}(4 \epsilon)\), respectively. We recall the Gauss and Weingarten formulas
\begin{align*}
    &\widetilde{\nabla}_X Y = \nabla_X Y + h(X,Y),\\
    &\widetilde{\nabla}_X \xi = -A_\xi X + D_X \xi,
\end{align*}
where $X$ and $Y$ are vector fields tangent to $M^n$, $\xi$ is a vector field normal to the submanifold, and $h$, $A_\xi$ and $D$ are the second fundamental form, the shape operator and the normal connection, respectively. Since $J$ is parallel, it is easy to check that (cf.~\cite{Chen-Ogiue1974})
\begin{align}
    &D_X JY =  J\left(\nabla_X Y\right), \label{F1}\\
    &A_{JY} X = -Jh(X,Y) = A_{JX} Y. \label{F2}
\end{align}
The Gauss and Codazzi equations are given by
\begin{equation}\nonumber
    \langle R(X,Y) W, Z\rangle = \epsilon \left(  \langle Y,W\rangle \langle X,Z\rangle - \langle Y,Z\rangle \langle X,W \rangle  \right)
    -\left\langle [A_{JW} , A_{JZ}]X,Y \right\rangle, 
    \end{equation}
    \begin{equation}
    \left(\bar{\nabla}_X h\right)(Y,Z) = \left(\bar{\nabla}_Y h\right)(X,Z),\label{F4}
\end{equation}
where $X$, $Y$, $Z$, $W$ are vector fields tangent to $M^n$, and $\bar{\nabla}h$ is defined by
\begin{align}
    \left(\bar{\nabla}_X h\right)(Y,Z) = D_X h(Y,Z) - h\left(\nabla_X Y , Z\right) -h\left(Y,\nabla_X Z\right).\label{F4111}
\end{align}
The mean curvature vector field $H$ is defined by $H=\frac{1}{n} {\rm trace} \, h$, and the function \(|H|\) is called the mean curvature. The submanifold \(M^{n}\) is minimal if $H$ vanishes identically. The Laplacian $\Delta$ acting on smooth functions $f$ on $M^n$ is defined by
\begin{align}
\Delta f=-\sum_{i=1}^n\left(\nabla_{e_i}\nabla_{e_i}f-\nabla_{\nabla_{e_i}e_i}f\right).\nonumber
\end{align}
In the following, we focus on some basic results on  Lagrangian surfaces and derive the system of PDEs of proper biharmonic Lagrangian surfaces in the complex space forms \(\widetilde M^{2}(4 \epsilon)\).

Let $M^2$ be a non-minimal Lagrangian surface in a complex space form
$\widetilde{M}^2(4\epsilon)$. The surface $M^2$ is assumed to be connected. On an open, connected subset $\mathcal{M} \subset M^2$
on which $H$ vanishes nowhere, consider the unit vector field
$e_1 = -JH/|H|$, which is tangent to $M^2$, and let $e_2$ be a unit
vector field tangent to $M^2$ and orthogonal to $e_1$. By using \eqref{F2}, we have
\begin{align}
    h(e_1,e_1) &= (a-c)Je_1 + bJe_2,\nonumber\\
    h(e_1,e_2) &= bJe_1 + cJe_2, \label{F31}\\
    h(e_2,e_2) &= cJe_1 -bJe_2, \nonumber
\end{align}
where $a$, $b$ and $c$ are smooth functions on $\mathcal{M}$ with $a = 2|H| > 0$.

Let \(\omega_{i}^{j}\left(e_{k}\right)=\langle\nabla_{e_{k}} e_{i}, e_{j}\rangle \) be the connection $1$-forms on $M^2$. Using \eqref{F31}, from \eqref{F1} and \eqref{F4111} it follows that
\begin{align*}
&\left(\bar{\nabla}_{e_{1}} h\right)\left(e_{2}, e_{1}\right)=\left(e_{1}(b)+(a-3c) \omega_{1}^{2}\left(e_{1}\right)\right) J e_{1}+\left(e_{1}(c)+3b \omega_{1}^{2}\left(e_{1}\right)\right) J e_{2},\\
    &\left(\bar{\nabla}_{e_{2}} h\right)\left(e_{1}, e_{1}\right)=\left(e_{2}(a-c)-3 b \omega_{1}^{2}\left(e_{2}\right)\right) J e_{1}+\left(e_{2}(b)+(a-3c) \omega_{1}^{2}\left(e_{2}\right)\right) J e_{2},\\
    &\left(\bar{\nabla}_{e_{1}} h\right)\left(e_{2}, e_{2}\right)=\left(e_{1}(c)+3b \omega_{1}^{2}\left(e_{1}\right)\right) J e_{1}-\left(e_{1}(b)-3c \omega_{1}^{2}\left(e_{1}\right)\right) J e_{2},\\
    &\left(\bar{\nabla}_{e_{2}} h\right)\left(e_{1}, e_{2}\right)= \left(e_{2}(b)+(a-3c) \omega_{1}^{2}\left(e_{2}\right)\right) J e_{1}+\left(e_{2} (c)+3b \omega_{1}^{2}\left(e_{2}\right)\right) J e_{2}.
\end{align*}
Since $\omega_{1}^{2} = -\omega_{2}^{1}$, using the Codazzi equation
\eqref{F4}, the above equations reduce to
\begin{align}
    e_{2} (a-c)-e_{1}(b)&=3 b \omega_{1}^{2}\left(e_{2}\right)+(a-3 c) \omega_{1}^{2}\left(e_{1}\right),\label{F32}\\
    e_{1} (b)+e_{2}(c)&=3c \omega_{1}^{2}\left(e_{1}\right)-3b \omega_{1}^{2}\left(e_{2}\right),\label{F33}\\
    e_{2}(b)-e_{1}(c)&=3 b \omega_{1}^{2}\left(e_{1}\right)-(a-3c) \omega_{1}^{2}\left(e_{2}\right).\label{F34}
\end{align}
It is straightforward to see that \eqref{F32} and \eqref{F33} yield
    \begin{align}\label{F35}
    \omega_{1}^{2}\left(e_{1}\right)=  \frac{e_2(a)}{a}.
\end{align}
For the rest of the paper, it is more convenient to work with equations
\eqref{F35}, \eqref{F33} and \eqref{F34} than to try to determine
$\omega^2_1\left(e_2\right)$ explicitly and to derive the relations between the functions
$a$, $b$ and $c$ and their derivatives.

Using the Gauss equation and the definition of curvature tensor $R$ respectively, the Gaussian curvature $K$ can be expressed as
\begin{align}
    K&=a c-2b^{2}-2c^{2}+\epsilon\label{F36}\\
    &=e_2\left(\omega_{1}^{2}\left(e_{1}\right)\right)-e_1\left(\omega_{1}^{2}\left(e_{2}\right)\right)-\left(\omega_{1}^{2}\left(e_{1}\right)\right)^2-\left(\omega_{1}^{2}\left(e_{2}\right)\right)^2.\nonumber
\end{align}
We recall  that  $M^2$ is biharmonic in a complex space form if and only if the following equation holds (cf.~\cite{Fetcu2010})
\begin{align}
    \Delta H = 5\epsilon H,\nonumber
\end{align}
which can be further decomposed into  normal and  tangential components (cf. \cite{Sasahara2007}):
\begin{align*}
    &\Delta^{D} H+\sum_{i=1}^{2} h\left(e_{i}, A_{H} e_{i}\right)=5\epsilon H,\\
    &\sum_{i=1}^{2}\left(A_{D_{e_i} H} e_i + \left(\nabla_{e_i} A_H\right)e_i\right)=0,
\end{align*}
where $\Delta^D$ denotes the rough Laplacian associated with the
normal connection.
With the above notations and \eqref{F35}, we obtain the following characterization of proper biharmonic Lagrangian surfaces $M^2$ immersed in \(\widetilde M^{2}(4 \epsilon)\) (see the details in \cite{Fetcu-Oniciuc2012,Sasahara2007, Luo-wang2026}).
\begin{lemma}[\cite{Luo-wang2026}]
 A non-minimal Lagrangian surface $M^2$ in \(\widetilde M^{2}(4 \epsilon)\) is proper biharmonic if and only if, on $\mathcal{M}$, the following equations hold
    \begin{align}
        &2e_1(a)\omega_{1}^{2}\left(e_{1}\right)+2e_2 (a)\omega_{1}^{2}\left(e_{2}\right)+a\left(e_1\left(\omega_{1}^{2}\left(e_{1}\right)\right)+e_2\left(\omega_{1}^{2}\left(e_{2}\right)\right)-ab\right)=0, \label{F3811}\\
        &-e_1e_1(a)-e_2e_2(a)+\omega_{1}^{2}\left(e_{1}\right)e_2(a)-\omega_{1}^{2}\left(e_{2}\right)e_1(a)+a\left((a-c)^2+2b^2+c^2\label{F3911}\right.\\\left.
        \right.&\left.+\left(\omega_{1}^{2}\left(e_{1}\right)\right)^2+\left(\omega_{1}^{2}\left(e_{2}\right)\right)^2-5\epsilon\right)=0,\nonumber\\
    &\left(\frac{3}{2}a-c\right)e_1(a)+2be_2 (a)+ac\omega_{1}^{2}\left(e_{2}\right)=0,\label{E310}\\
&be_1(a)+\frac1 2(a+4c)e_2(a)-ab\omega_{1}^{2}\left(e_{2}\right)=0.\label{E311}
    \end{align}
\end{lemma}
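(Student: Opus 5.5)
We will derive the four equations directly from the normal and tangential parts of the biharmonic equation, using the adapted frame $e_1=-JH/|H|$. By construction $H=\frac a2 Je_1$, so the computation reduces to differentiating $Je_1$ and the coefficients $a,b,c$ along the frame. The relations \eqref{F1}, \eqref{F2}, \eqref{F31} and \eqref{F35} do the bookkeeping. Throughout we write $\omega_i=\omega_1^2(e_i)$.

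\emph{Step 1: normal connection.} By \eqref{F1}, $D_{e_i}Je_1=J\nabla_{e_i}e_1=\omega_iJe_2$ and $D_{e_i}Je_2=-\omega_iJe_1$. Hence
\[
D_{e_i}H=\tfrac12\bigl(e_i(a)Je_1+a\,\omega_iJe_2\bigr).
\]
Applying $D_{e_i}$ once more, summing over $i$, and subtracting $D_{\nabla_{e_i}e_i}H$ (with $\nabla_{e_1}e_1=\omega_1e_2$, $\nabla_{e_2}e_2=-\omega_2e_1$) gives $\Delta^DH$. Its $Je_1$-component is
\[
\tfrac12\bigl(-e_1e_1(a)-e_2e_2(a)+\omega_1e_2(a)-\omega_2e_1(a)+a(\omega_1^2+\omega_2^2)\bigr).
\]
Its $Je_2$-component is
\[
-\tfrac12\bigl(2e_1(a)\omega_1+2e_2(a)\omega_2+a(e_1(\omega_1)+e_2(\omega_2))\bigr).
\]
Both signs come from the convention $\Delta=-\mathrm{tr}\nabla^2$.

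\emph{Step 2: the $A_H$ term.} From \eqref{F2}, $A_{Je_1}e_1=a-c$ times $e_1$ plus $b\,e_2$, and $A_{Je_1}e_2=b\,e_1+c\,e_2$. So
\[
A_H=\tfrac a2\begin{pmatrix}a-c&b\\ b&c\end{pmatrix},
\]
and $\sum_i h(e_i,A_He_i)$ follows from \eqref{F31}. Its $Je_1$-component is $\frac a2\bigl((a-c)^2+2b^2+c^2\bigr)$, and its $Je_2$-component is $\frac a2\bigl((a-c)b+bc-bc-cb\bigr)$, which simplifies to $\frac a2\,ab$ after collecting. Equating the $Je_1$- and $Je_2$-components of the normal part with $5\epsilon H=\frac{5\epsilon a}{2}Je_1$ yields \eqref{F3911} and \eqref{F3811}, respectively. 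The sign of the $ab$ term must be checked carefully here.

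\emph{Step 3: tangential part.} We compute $A_{D_{e_i}H}e_i$ with $D_{e_i}H$ from Step 1 and $A_{Je_2}$ from \eqref{F2}. We then compute $\sum_i(\nabla_{e_i}A_H)e_i$ by differentiating the matrix entries of $A_H$ and rotating by the connection forms. The derivatives $e_i(b)$ and $e_i(c)$ that appear are eliminated with the Codazzi relations \eqref{F33}, \eqref{F34}. The coefficient $\omega_1$ is replaced by $e_2(a)/a$ via \eqref{F35}. After this substitution the $e_1$- and $e_2$-components should collapse to \eqref{E310} and \eqref{E311}, which involve only $a$, $b$, $c$, $e_i(a)$ and $\omega_2$.

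The main obstacle is this last elimination. One has to check that all occurrences of $e_i(b)$, $e_i(c)$ and $\omega_1$ cancel exactly, using only the combinations in \eqref{F33}–\eqref{F35} and not \eqref{F32} separately. The plan is to organize the tangential sum as $\frac12\nabla|H|^2$-type terms plus $\mathrm{trace}\,\nabla A_H$, apply Codazzi in the form $(\nabla_{e_1}A_H)e_2-(\nabla_{e_2}A_H)e_1=0$ corrected by derivatives of $a$, and then match coefficients. The converse direction is immediate, since each step is an equivalence on $\mathcal M$.
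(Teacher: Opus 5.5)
Your proposal is correct and follows the same route the paper indicates: decompose $\Delta H=5\epsilon H$ into its normal and tangential parts and substitute the frame data \eqref{F1}, \eqref{F2}, \eqref{F31} and \eqref{F35}. Your Step~1 and Step~2 formulas check out, and the Step~3 elimination does close. The $e_i(b),e_i(c)$ enter only through $\frac a2\bigl(e_2(b)-e_1(c)\bigr)$ and $\frac a2\bigl(e_1(b)+e_2(c)\bigr)$, which \eqref{F34} and \eqref{F33} replace, and the leftover $\omega_1$-terms are removed by \eqref{F35}.

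One small slip: in Step~2 the $Je_2$-component is $\frac a2\bigl((a-c)b+bc+bc-cb\bigr)=\frac a2\,ab$, since the third term comes from $b\,h(e_2,e_1)$ and carries a plus sign. The final value you state is nonetheless right.
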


We consider proper biharmonic surfaces with constant mean curvature in \(\widetilde M^{2}(4 \epsilon)\). This case was settled by Sasahara \cite{Sasahara2007} but we include the proof here for completeness.

\begin{theorem} [\cite{Sasahara2007}]\label{T5}
    Let $M^2$ be a proper biharmonic Lagrangian surface in a 2-dimensional complex space form \(\widetilde{M}^{2}(4 \epsilon)\), where $\epsilon\in\{-1,0,1\}$. If $M^2$ has constant mean curvature, then $M^2$ is flat and $\epsilon = 1$. Precisely, $M^2$ is an open part of the Lagrangian tori \eqref{FL}.
\end{theorem}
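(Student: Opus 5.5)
With $|H|$ constant, $a=2|H|$ is a positive constant, so $e_1(a)=e_2(a)=0$. The plan is to feed this into the biharmonic system \eqref{F3811}--\eqref{E311} and the Codazzi and Gauss relations, and show that everything is forced to be constant.

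First, \eqref{F35} gives $\omega_1^2(e_1)=0$. Then \eqref{E310} and \eqref{E311} reduce to $ac\,\omega_1^2(e_2)=0$ and $ab\,\omega_1^2(e_2)=0$. If $\omega_1^2(e_2)\neq0$ on some open set, then $b=c=0$ there. In that case \eqref{F3811} is $a\,e_2(\omega_1^2(e_2))=0$, and Codazzi \eqref{F34} gives $0=-a\,\omega_1^2(e_2)$, a contradiction. Hence $\omega_1^2(e_2)=0$ as well, so $\omega_1^2\equiv0$ and $M^2$ is flat by the second expression in \eqref{F36}.

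Next I use Codazzi with $\omega_1^2=0$. Equations \eqref{F33} and \eqref{F34} give $e_1(b)+e_2(c)=0$ and $e_2(b)-e_1(c)=0$. Equation \eqref{F32} gives $-e_2(c)-e_1(b)=0$, which is the same as \eqref{F33}. The biharmonic equations become:
\begin{itemize}
\item \eqref{F3811}: $-a^2b=0$, so $b=0$;
\item \eqref{F3911}: $(a-c)^2+c^2=5\epsilon$.
\end{itemize}
With $b=0$, the Codazzi equations yield $e_1(c)=e_2(c)=0$, so $c$ is constant. Flatness and \eqref{F36} give $ac-2c^2+\epsilon=0$.

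Now I eliminate. If $\epsilon\le0$, then $(a-c)^2+c^2=5\epsilon$ forces $a=c=0$, contradicting $a>0$. So $\epsilon=1$, and the system is
\[
ac-2c^2+1=0,\qquad (a-c)^2+c^2=5 .
\]
The first equation gives $a=(2c^2-1)/c$, so $a-c=(c^2-1)/c$. Substituting, $(c^2-1)^2+c^4=5c^2$, that is, $2c^4-7c^2+1=0$, whence $c^2=(7\pm\sqrt{41})/4$. The sign choice must make $a>0$. This gives $|H|^2=a^2/4$, which should match $(13\pm\sqrt{41})/8$ as stated in the Remark above.

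The main obstacle is the identification step. We now have a flat Lagrangian surface in $\mathbb CP^2(4)$ with $\nabla h$ computable: all coefficients are constant in a parallel frame, so $h$ is parallel. I would invoke the classification of Lagrangian surfaces with parallel second fundamental form, or integrate directly. To integrate, lift horizontally via the Hopf fibration to $\mathbb S^5(1)\subset\mathbb C^3$ and use flat coordinates $u,v$ with $e_1=\partial_u$, $e_2=\partial_v$. The structure equations then become a linear constant-coefficient ODE system for the position vector $\phi$. Its solutions are sums of exponentials $\exp(\mathrm{i}(\lambda_ku+\mu_kv))$, and solving the eigenvalue problem with the specific constants $a,c$ recovers \eqref{IntegralCParallelSurface}, hence the tori \eqref{FL}. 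The delicate parts are the eigenvalue computation and the check that both signs give admissible radii. Finally, since everything is constant and the surface is connected, analyticity extends the description from $\mathcal{M}$ to all of $M^2$.
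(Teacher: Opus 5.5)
Your proposal is correct and follows essentially the same route as the paper: $\omega_1^2(e_1)=0$ comes from \eqref{F35}, and the contradiction via \eqref{F34} forces $\omega_1^2(e_2)=0$. Then $b=0$, $(a-c)^2+c^2=5\epsilon$ and $ac-2c^2+1=0$ give $\epsilon=1$ and $c^2=(7\pm\sqrt{41})/4$, and the horizontal lift is obtained by integrating the constant-coefficient system \eqref{FC2}.

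The only differences are cosmetic. You obtain the constancy of $c$ from Codazzi rather than from the algebraic system. Your closing analyticity remark is unnecessary, because $H$ vanishes nowhere when $|H|$ is a positive constant, so $\mathcal{M}=M^2$ from the start.
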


\begin{proof}
Since $|H|$ is a non-zero constant, $\mathcal{M} = M^2$ and $a = 2|H|$ is a positive constant on $M^2$.
Then \eqref{F35} gives $\omega_1^2\left(e_1\right) = 0$, and hence
\eqref{F3811}--\eqref{E311} reduce to

        \begin{align}   \label{1FU1}
    \begin{cases}
        &e_2\left(\omega_{1}^{2}\left(e_{2}\right)\right)-ab=0, \\
        &(a-c)^2+2b^2+c^2+\left(\omega_{1}^{2}\left(e_{2}\right)\right)^2-5\epsilon=0,\\
        &c\omega_{1}^{2}\left(e_{2}\right)=0, \\
        &b\omega_{1}^{2}\left(e_{2}\right)=0.
        \end{cases}
        \end{align}
We claim that $\omega_1^2\left(e_2\right) = 0$ on $M^2$. Suppose, to the contrary,
that $\omega_1^2\left(e_2\right) \neq 0$ at any point of an open, connected
subset $\mathcal{U}$ of $M^2$. Then the last two equations of
\eqref{1FU1} yield $b = c = 0$ on $\mathcal{U}$. Combined with \eqref{F34}, this
gives $\omega_1^2\left(e_2\right) = 0$ on $\mathcal{U}$, a contradiction. Therefore
$\omega_1^2\left(e_2\right) = 0$ on $M^2$, and since $\omega_1^2\left(e_1\right) = 0$,
relation \eqref{F36} gives $K = 0$. Moreover, the first equation of
\eqref{1FU1} yields $b = 0$, and the second one reduces to

\begin{align}
(a-c)^2+c^2-5\epsilon=0,\label{C1}
\end{align}
which, in particular, forces $\epsilon$ to be positive, that is $\epsilon = 1$.

Relation \eqref{F36}  implies now
    \begin{align}
ac - 2c^2 + 1 = 0. \label{C2}
        \end{align}
From \eqref{C1} and \eqref{C2} we get
$$
c^2=\frac{7\pm\sqrt{41}}{4} \quad \text{and} \quad a^2=\frac{13\pm\sqrt{41}}{2}.
$$
Consequently, either
$$
a=\sqrt{\frac{13 + \sqrt{41}}{2}}, \quad b=0 \quad \text{and} \quad c= \sqrt{\frac{7 + \sqrt{41}}{4}},
$$
or
$$
a=\sqrt{\frac{13 - \sqrt{41}}{2}}, \quad b=0 \quad \text{and} \quad c= - \sqrt{\frac{7 - \sqrt{41}}{4}}.
$$

\noindent We now recall a construction due to Reckziegel \cite{Reckziegel}, which
yields an explicit representation of the proper biharmonic Lagrangian
surface. Any Lagrangian immersion $\phi\colon M^2 \to \mathbb{C}P^2(4)$
can be lifted locally (and globally, if $M^2$ is simply connected) via
the Hopf fibration $\pi\colon \mathbb{S}^5(1) \to \mathbb{C}P^2(4)$ to a Legendrian
immersion $\bar\phi\colon M^2 \to \mathbb{S}^5(1)$ with $\pi \circ \bar\phi = \phi$.
Moreover, by standard arguments (see \cite{Chen2005, Reckziegel}), the
second fundamental form $\bar h$ of $\bar\phi$ in $\mathbb{S}^5(1)$ is the
horizontal lift of the second fundamental form $h$ of $\phi$, that is
$\mathrm{d} \pi \left(\bar h(X,Y)\right ) = h(X,Y)$. Consequently, $\hat\nabla_X Y$ is given by

    \begin{align}
    \hat{\nabla}_XY=  \nabla_XY+\bar{h}(X,Y)-\langle X, Y\rangle \bar{\phi},\nonumber
 \end{align}
 where $\hat{\nabla}$ is the Levi-Civita connection of $\mathbb R^6\equiv \mathbb C^3$, and $X$ and $Y$ are vector fields tangent to $M^2$.

In our case, since $\omega_1^2 = 0$, the vector fields $e_1$ and
$e_2$ are parallel and commute. Hence there exist local coordinates
$(u, v)$ on $M^2$ such that $\partial_u = e_1$ and $\partial_v = e_2$;
in particular, the metric is $g = \mathrm{d}u^2 + \mathrm{d}v^2$. Using \eqref{F31} with
$b = 0$, we find that the Legendrian lift $\bar\phi$ satisfies the
system of PDEs

        \begin{align}   \label{FC2}
    \begin{cases}
        &\bar{\phi}_{uu}=\mathrm{i}(a-c)\bar{\phi}_{u}-\bar{\phi}, \\
        &\bar{\phi}_{uv}=\mathrm{i}c\bar{\phi}_{v},\\
        &\bar{\phi}_{vv}=\mathrm{i}c\bar{\phi}_{u}-\bar{\phi}.
    \end{cases}
 \end{align}
Integrating \eqref{FC2}, we find that the Legendrian lift $\bar\phi$ is given by
(see also \cite{Sasahara2007})

    \begin{align}
{\bar\phi}(u,v) = \left(
 \sqrt{\frac{c^2}{c^2+1}} \exp\left({-\frac{\mathrm{i}}{c}u}\right),\;
 \sqrt{\frac{1}{c^2+1}} \exp({\mathrm{i}cu}) \cos\sqrt{c^2+1}\,v,\right.\nonumber\\
 \left.
 \sqrt{\frac{1}{c^2+1}} \exp({\mathrm{i}cu}) \sin\sqrt{c^2+1}\,v
 \right).\nonumber
        \end{align}
Finally, acting on ${\bar\phi}$ with the linear holomorphic isometry of $\mathbb{R}^6$
$$
(z_1,z_2,z_3)\to\left(z_1,\frac{z_2+\mathrm{i}z_3}{\sqrt2},\frac{z_2-\mathrm{i}z_3}{\sqrt2}\right),
$$
and making a simple change of parameters of $\mathbb{R}^2$, we obtain the expression~\eqref{IntegralCParallelSurface}. The new parameters are still denoted by $(u,v)$. Moreover, the Hopf cylinder over $\phi_{\pm}$ in $\mathbb{S}^5(1)$, that is $\pi^{-1}\left(\pi\left(\phi_{\pm}\right)\right)$ is an open part of
\begin{align}
    \left(\mathbb{S}^1\left(\sqrt{\frac{9\pm\sqrt{41}}{20}}\right)\times \mathbb{S}^1\left(\sqrt{\frac{11\mp\sqrt{41}}{40}}\right)\times \mathbb{S}^1\left(\sqrt{\frac{11\mp\sqrt{41}}{40}}\right)\right), \nonumber
\end{align}
where $\pi: \mathbb{S}^5(1) \to \mathbb{C}P^2(4)$ is the Hopf fibration. Now the result follows.
\end{proof}

In contrast, classifying proper biharmonic Lagrangian surfaces with constant
Gaussian curvature is considerably more difficult. The main reason is
that the biharmonic equation, which is the Euler--Lagrange equation of
the bienergy functional, is expressed in terms of extrinsic quantities
(the mean curvature vector and the second fundamental form), whereas
the assumption that $K$ is constant is intrinsic. In the next section,
we develop a new technique that yields a complete classification of
proper biharmonic Lagrangian surfaces with constant Gaussian curvature
in complex space forms.

\section{Proof of Theorem \ref{T2}}

Let $M^2$ be a proper biharmonic Lagrangian surface in a complex space form
$\widetilde{M}^2(4\epsilon)$, where $\epsilon \in \{-1, 0, 1\}$ and $M$ is assumed to be connected. In this section, we assume that the
Gaussian curvature $K$ of $M^2$ is constant and 
the main goal is to prove that the mean curvature
$|H|$ is also constant. Assume, to the contrary, that
$\operatorname{grad}|H|^2$ does not vanish identically. Then, there
exists an open, connected subset  $\mathcal{M}$ of $M^2$ on which
$\operatorname{grad}|H|^2 \neq 0$ at any point. On this subset, the
function $|H|^2$ cannot vanish at any point. Consequently, $|H|$
is a smooth, positive function on $\mathcal{M}$ and $\operatorname{grad}|H| \neq 0$ at any point,
that is $a > 0$ and $\left(e_1(a)\right)^2 + \left(e_2(a)\right)^2 > 0$ on $\mathcal{M}$.

Recall from \eqref{F36} that, on $\mathcal{M}$,
\begin{equation}\label{eq:3.1}
  ac - 2b^2 - 2c^2 = K - \epsilon = \text{const},
\end{equation}
where $|H| = a/2$.

In what follows, using the tangential component of the biharmonic equation and Gauss and Codazzi equations, we establish two intermediate results that will be
needed later.

\begin{lemma}\label{le33}
The functions $b$ and $c$ satisfy $b^2c^2>0$ on an open, dense subset of $\mathcal{M}$.
\end{lemma}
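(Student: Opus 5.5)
The plan is to argue by contradiction. The set where $b^2c^2>0$ is open, so it suffices to show it is dense. If it were not, there would be an open, connected subset $\mathcal U\subset\mathcal M$ on which $b\equiv0$ or $c\equiv0$. In each case I will combine \eqref{eq:3.1}, the Codazzi relations \eqref{F35}, \eqref{F33}, \eqref{F34} and the biharmonic system \eqref{F3811}--\eqref{E311}. The goal is to force $a$ to be constant on some open subset of $\mathcal U$, contradicting $\operatorname{grad}a\neq0$ on $\mathcal M$.

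\emph{Case $c\equiv0$ on $\mathcal U$.} By \eqref{eq:3.1}, $b^2$ is constant, so $e_1(b)=e_2(b)=0$.
\begin{itemize}
\item Equation \eqref{F33} gives $b\,\omega_1^2(e_2)=0$.
\item Equation \eqref{F34} gives $3b\,\omega_1^2(e_1)=a\,\omega_1^2(e_2)$.
\end{itemize}
If $b\neq0$, then $\omega_1^2(e_2)=0$, hence $\omega_1^2(e_1)=0$, and \eqref{F35} gives $e_2(a)=0$. Then \eqref{E310} reads $\tfrac32 a\,e_1(a)=0$, so $\operatorname{grad}a=0$, a contradiction.

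If $b=0$, then $\omega_1^2(e_2)=0$ and \eqref{E310} gives $e_1(a)=0$. Set $f=\ln a$, so that $\omega_1^2(e_1)=e_2(f)$. The intrinsic formula \eqref{F36} for $K$ reads
$$e_2e_2(f)-(e_2f)^2=K=\epsilon.$$
Dividing \eqref{F3911} by $a$ and using $e_2e_2(a)/a=e_2e_2(f)+(e_2f)^2$, it becomes
$$-e_2e_2(f)+(e_2f)^2+a^2-5\epsilon=0.$$
Substituting the first relation into the second gives $a^2=6\epsilon$, so $a$ is constant, a contradiction.

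\emph{Case $b\equiv0$ on $\mathcal U$.} Now \eqref{E311} becomes $(a+4c)\,e_2(a)=0$.
\begin{itemize}
\item If $a+4c=0$ on an open subset, then \eqref{eq:3.1} gives $-\tfrac38a^2=\mathrm{const}$, so $a$ is constant there, a contradiction.
\item Otherwise $e_2(a)=0$ on an open subset. Then $\omega_1^2(e_1)=0$ by \eqref{F35}, and $e_2(c)=0$ by \eqref{F33}.
\end{itemize}
In the second subcase, \eqref{F34} gives $e_1(c)=(a-3c)\,\omega_1^2(e_2)$ and \eqref{E310} gives $(\tfrac32a-c)\,e_1(a)=-ac\,\omega_1^2(e_2)$. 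If $\tfrac32a=c$ on an open set, \eqref{eq:3.1} again makes $a$ constant. So I may divide by $\tfrac32a-c$ and express $e_1(a)$ through $\omega_1^2(e_2)$.

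Differentiating \eqref{eq:3.1} along $e_1$ gives $c\,e_1(a)+(a-4c)\,e_1(c)=0$. Substituting the two expressions above yields
$$\omega_1^2(e_2)\Big[(a-4c)(a-3c)-\frac{ac^2}{\tfrac32a-c}\Big]=0.$$
If $\omega_1^2(e_2)=0$ on an open set, then $e_1(a)=0$ and $\operatorname{grad}a=0$, a contradiction. Otherwise the bracket vanishes on an open set. This gives a homogeneous polynomial relation
$$(a-4c)(a-3c)(3a-2c)-2ac^2=0.$$
Hence $c=\lambda a$, where $\lambda$ is locally constant as a root of a nonzero cubic. Plugging into \eqref{eq:3.1} gives $(\lambda-2\lambda^2)a^2=\mathrm{const}$. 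This makes $a$ constant unless $\lambda\in\{0,\tfrac12\}$. However, $\lambda=0$ would give $a^3=0$, impossible, and a direct check shows $\lambda=\tfrac12$ is not a root. So $a$ is constant, a contradiction.

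The main obstacle is the subcase $b=c=0$, since the tangential equations alone do not pin down $e_2(a)$. The normal equation \eqref{F3911} must be combined with the intrinsic curvature formula in \eqref{F36}. The key point is that the combination $e_2e_2(\ln a)-(e_2\ln a)^2$ appears in both, and cancelling it yields the algebraic relation $a^2=6\epsilon$.
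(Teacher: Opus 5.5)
Your proof is correct, and its skeleton matches the paper's. Your case $c\equiv0$, $b\neq0$ is the paper's Case C verbatim. Your subcase $b=c=0$ is Case A. Your case $b\equiv0$ covers Case B and produces the same cubic $3a^3-23a^2c+48ac^2-24c^3=0$.

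There are two local differences worth noting.

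\textbf{The subcase $b=c=0$ is not an obstacle.} With $b=c=0$, \eqref{E311} reads $\tfrac12 a\,e_2(a)=0$ and \eqref{E310} reads $\tfrac32 a\,e_1(a)=0$, so $\operatorname{grad}a=0$ at once; this is exactly the paper's Case A. Your detour through the normal equation \eqref{F3911} and the intrinsic formula in \eqref{F36}, which yields $a^2=6\epsilon$, is valid but unnecessary. Your closing remark that the tangential equations cannot pin down $e_2(a)$ is mistaken.

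\textbf{The case $b\equiv0$.} The paper assumes $c\neq0$ there. It gets $e_2(a)=0$ by combining the $e_2$-derivative of \eqref{eq:3.1} with \eqref{E311}, since otherwise $a=3c=-4c$. Then, because $e_1(a)\neq0$, it reads the cubic off as the vanishing determinant of the linear system in $(e_1(a),\omega_1^2(e_2))$. This saves your separate treatment of $3a=2c$ and of $\omega_1^2(e_2)\equiv0$. The paper then substitutes $a=(K-\epsilon+2c^2)/c$ to get a nontrivial sextic in $c$.

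Your finish is cleaner. Since $c/a=\lambda$ is a root of a fixed cubic, you get $(\lambda-2\lambda^2)a^2=K-\epsilon$, and you only need to check $\lambda\notin\{0,\tfrac12\}$. This avoids that elimination, makes the nondegeneracy check trivial, and does not require $c\neq0$.
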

\begin{proof}
    We argue by contradiction and assume that
    $b^2c^2=0$ on an open, connected subset $\mathcal{U}$ of $\mathcal{M}$. The proof then splits into the following cases.

    \noindent \textbf{Case A.} $b=c=0$ on $\mathcal{U}$. In this case, it follows from \eqref{E310} and \eqref{E311} that \begin{align*}
        \frac{3}{2}ae_1(a)=0,\quad
        \frac1 2ae_2(a)=0.
    \end{align*} Thus, $e_1(a)=e_2(a)=0$ on $\mathcal{U}$, so $\operatorname{grad}a=0$ there, contradicting $\operatorname{grad}a\ne0$ throughout $\mathcal{M}$.

    \noindent \textbf{Case B.} $b=0$ on $\mathcal{U}$ and $c\ne0$ at any point of an open, connected subset of $\mathcal{U}$,  still denoted by $\mathcal{U}$. Then, we obtain from  \eqref{F33}, \eqref{F34}  and \eqref{F35} that
    \begin{align}
        &e_2(c)=3c\omega_1^2\left(e_1\right)=\frac{3c}{a}e_2(a),\label{1}\\
        &e_1(c)=(a-3c)\omega_1^2\left(e_2\right).\label{3}
    \end{align}
    By \eqref{eq:3.1} we have $ac-2c^2=K-\epsilon$ and differentiating this further yields $$ce_i(a)+(a-4c)e_i(c)=0\quad \text{for}\quad i=1,2,$$ which together with \eqref{1} and \eqref{3} provides
    \begin{align}
        &ce_2(a)+(a-4c)\frac{3c}{a}e_2(a)=\frac{4c}{a}(a-3c)e_2(a)=0,\label{4}\\
        &ce_1(a)+(a-4c)(a-3c)\omega_1^2\left(e_2\right)=0.\label{5}
    \end{align}
 Equations \eqref{4} and \eqref{E311} give $(a-3c)e_2(a)=0$ and $(a+4c)e_2(a)=0$. If $e_2(a) $
 is non-zero, then $a=3c=-4c$, hence $c=0$, a contradiction with assumption $c\neq0$. Therefore  $e_2(a)=0$ on $\mathcal{U}$.
    Now, \eqref{E310} turns into
    \begin{align}
        \left(3a-2c\right)e_1(a)+2ac\omega_{1}^{2}\left(e_{2}\right)=0.\label{2}
    \end{align}
    Since $e_2(a)=0$, it follows that $e_1(a)\neq0$ at any point of $\mathcal{U}$. It is direct to see from \eqref{5} and \eqref{2} that
    \begin{align*}
        3a^3-23a^2c+48ac^2-24c^3=0.
    \end{align*}
Moreover, taking into account $ac-2c^2=K-\epsilon$, the above equation becomes
    \begin{align*}
    4c^6 -8c^4(K - \epsilon) -5c^2(K- \epsilon)^2 +3(K-\epsilon)^3=0,
    \end{align*}
    which is a non-trivial algebraic polynomial equation of $c$ with constant coefficients. Hence, $c$ is a constant and hence $a$ is also a constant. This gives $\operatorname{grad}a=0$ on $\mathcal{U}$, contradicting $\operatorname{grad}a\ne0$ at any point of $\mathcal{U}$.

   \noindent \textbf{Case C.}  $b\ne0$ at some point of $\mathcal{U}$. We can choose an open, connected subset of $\mathcal{U}$, still denoted by $\mathcal{U}$, such that $b\ne0$  at any point of $\mathcal{U}$.  It follows from $b^2c^2=0$ that $c=0$ on $\mathcal{U}$, then we derive from \eqref{eq:3.1} that $2b^2=\epsilon-K$. Hence $e_1(b)=e_2(b)=0$, which together with \eqref{F33}, \eqref{F34} and \eqref{F35}  gives
    $e_2(a)=\omega_1^2\left(e_1\right)=\omega_1^2\left(e_2\right)=0$. By \eqref{E310}, we have $e_1(a)=0$.  Thus, $\operatorname{grad}a=0$ on $\mathcal{U}$, contradicting $\operatorname{grad}a\ne0$ throughout $\mathcal{U}$.
\end{proof}
Now, let us restrict  to an open, connected subset of the dense subset of $\mathcal{M}$ mentioned in Lemma \ref{le33}, still denoted by $\mathcal{M}$, on which $a>0$, $\left(e_1(a)\right)^2 + \left(e_2(a)\right)^2 > 0$ and $b^2c^2 > 0$ at any point. Henceforth, we will work on this open subset $\mathcal{M}$ and, using the tangential component of the biharmonic equation, we establish certain relationships between $e_2(a)$ and $e_1(a)$, $\omega_1^2\left(e_i\right)$, $e_i(b)$, $e_i(c)$, $i=1,2$.
\begin{proposition}
    There exists an open, connected subset of $\mathcal{M}$, still denoted by $\mathcal{M}$, such that $a-4c\ne0$  at any point of $\mathcal{M}$. Moreover, on this subset, the functions $a$, $b$, $c$ and the connection coefficient $\omega_1^2\left(e_2\right)$ satisfy the following system:
    \begin{small}
    \begin{align}\label{E1}
        \begin{cases}
            3abe_1(a)=\left(2(K-\epsilon)-3ac\right)e_2(a),\\
            6a\left(a^2-8(K-\epsilon) \right)e_1(b)=\left(3a^3 - 18a^2c+(K-\epsilon)(40 a -144c)\right)e_2(a),\\
            6a^{2} b \left(8(K-\epsilon) - a^{2}\right)e_2(b)=\left(3 a^{5} - 18 a^{4} c + 18 a^{3} c^{2}- 24 (K-\epsilon)^{2}( a - 4  c) \right.\\\left.\quad\quad\quad\quad\quad\quad\qquad\qquad\qquad\quad
            -a(K-\epsilon)\left(5  a^{2} + 4  ac - 144 c^{2}\right)\right)e_2(a),\\
            3 a b \left( a^{2}- 8 (K-\epsilon) \right)e_1(c)=\left(3 a^{3} c - 9 a^{2} c^{2}- 3 (K-\epsilon) a^{2} + 44 (K-\epsilon) a c \right.\\\left.\quad\quad\quad\quad\quad\quad\,\,\qquad\qquad\qquad- 72 (K-\epsilon) c^{2}- 40 (K-\epsilon)^{2} \right)e_2(a),\\
            3 a^{2} \left(8 (K-\epsilon)- a^{2} \right)e_2(c)=\left(6 a^{4} - 9 a^{3} c- 10 (K-\epsilon) a^{2} - 72 (K-\epsilon) a c- 48 (K-\epsilon)^{2}\right)e_2(a),\\
        6a^2b   \omega_1^2\left(e_2\right)=\left(3a^2 + 6ac+4(K-\epsilon)\right)e_2(a).
        \end{cases}
    \end{align}
\end{small}
\end{proposition}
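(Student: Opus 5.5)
Set $\kappa:=K-\epsilon$, so that \eqref{eq:3.1} reads $ac-2b^2-2c^2=\kappa$. Treat the unknowns $e_1(a),e_2(a),e_1(b),e_2(b),e_1(c),e_2(c),\omega_1^2(e_2)$ as a linear system, with $\omega_1^2(e_1)=e_2(a)/a$ substituted from \eqref{F35}. The available equations are linear in these seven quantities, with coefficients polynomial in $a,b,c$:
\begin{itemize}
\item the two Codazzi equations \eqref{F33} and \eqref{F34};
\item the two tangential biharmonic equations \eqref{E310} and \eqref{E311};
\item the two derivatives of the Gauss relation, $c\,e_i(a)+(a-4c)e_i(c)-4b\,e_i(b)=0$ for $i=1,2$.
\end{itemize}
These are six equations in seven unknowns. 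The plan is to solve for everything in terms of $e_2(a)$.

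First, eliminate $\omega_1^2(e_2)$ between \eqref{E310} and \eqref{E311}. Multiply \eqref{E310} by $b$ and \eqref{E311} by $c$ and add; since $bc\neq0$ this gives
\[
b\left(\tfrac32a-c\right)e_1(a)+2b^2e_2(a)+bc\,e_1(a)+\tfrac c2(a+4c)e_2(a)=0,
\]
that is,
\[
\tfrac32ab\,e_1(a)+\left(2b^2+\tfrac12ac+2c^2\right)e_2(a)=0.
\]
Using $2b^2+2c^2=ac-\kappa$ turns this into $3ab\,e_1(a)=(2\kappa-3ac)e_2(a)$, which is the first line of \eqref{E1}.

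Substituting back into \eqref{E311} then gives $ab\,\omega_1^2(e_2)$ in terms of $e_2(a)$. One should find $6a^2b\,\omega_1^2(e_2)=(3a^2+6ac+4\kappa)e_2(a)$, again after reducing $b^2$ via the Gauss relation.

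Next, insert $e_1(a)$, $\omega_1^2(e_1)$ and $\omega_1^2(e_2)$ into \eqref{F33}, \eqref{F34} and the two differentiated Gauss relations. This leaves four linear equations in the four unknowns $e_1(b),e_2(b),e_1(c),e_2(c)$, but they decouple in pairs:
\begin{itemize}
\item \eqref{F33} together with the $i=1$ Gauss derivative involves $e_1(b)$, $e_1(c)$ (and $e_2(c)$ through \eqref{F33});
\item \eqref{F34} together with the $i=2$ Gauss derivative involves $e_2(b)$, $e_2(c)$ (and $e_1(c)$).
\end{itemize}
Solving this $4\times4$ system by Cramer's rule, with $b^2$ eliminated throughout via $2b^2=ac-2c^2-\kappa$, the determinant should factor as a nonzero multiple of $(a-4c)$ times $(a^2-8\kappa)$. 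This factorization is what produces the denominators $a^2-8\kappa$ visible in \eqref{E1}, and it is why the statement first restricts to a set where $a-4c\neq0$.

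To justify that restriction, suppose $a=4c$ on an open set. Then the Gauss relation gives $2c^2-2b^2=\kappa$, so $b,c$ are tied to $a$. Differentiating yields constraints which, combined with the first line of \eqref{E1} and \eqref{E311}, should force $e_2(a)=0$ and then $e_1(a)=0$ (or $b$, $c$ constant), contradicting $\operatorname{grad} a\neq0$. The argument mirrors Case B of Lemma \ref{le33}, where a polynomial identity with constant coefficients forces $c$ to be constant.

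The factor $a^2-8\kappa$ is not assumed nonzero in the statement; it appears only as a coefficient on the left-hand sides. Accordingly, I would write each line of \eqref{E1} in cleared form, i.e. multiplied through by this factor, so that no extra hypothesis is needed.

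The main obstacle is the algebra of the $4\times4$ elimination, in particular reducing the numerators modulo the Gauss relation to the stated polynomials, such as $3a^5-18a^4c+\dots$. I would carry this out systematically, replacing $b^2$ wherever it appears, and would verify each line of \eqref{E1} by substituting it back into all six original equations.
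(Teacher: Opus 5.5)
Your derivation of the first and last lines of \eqref{E1} is correct. Taking $\omega_1^2(e_2)$ from \eqref{E311} rather than from \eqref{E310} (the paper's choice) even avoids using $c\neq0$. Solving the remaining four linear equations for $e_1(b),e_2(b),e_1(c),e_2(c)$ is what the paper does by successive elimination.

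Your predicted determinant is wrong, however, and so are the conclusions you draw from it. Take the rows \eqref{F33}, \eqref{F34}, $-4b\,e_1(b)+(a-4c)e_1(c)=-c\,e_1(a)$, $-4b\,e_2(b)+(a-4c)e_2(c)=-c\,e_2(a)$, and the columns $e_1(b),e_2(b),e_1(c),e_2(c)$. The determinant is
\[
(a-4c)^2+16b^2=a^2-8\kappa ,
\]
by \eqref{eq:3.1}. There is no factor $a-4c$, so the linear solve never needs $a\neq4c$. That is not why the statement restricts to $a-4c\neq0$; in the paper the restriction matters later, when the factor $a-4c$ in \eqref{P13} is cancelled. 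Moreover, $a^2-8\kappa>0$ automatically because $b\neq0$, which is the observation the paper uses. Your cleared-form device via the adjugate is logically valid, but it misses this.

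The genuine gap is the claim that $a-4c$ does not vanish identically. This is part of the statement, and you only say it ``should'' follow. In the case $a=4c$ you list these relations:
\begin{itemize}
\item $e_i(a)=4e_i(c)$ and $c\,e_i(a)=4b\,e_i(b)$, from differentiating $a=4c$ and the Gauss relation;
\item the first line of \eqref{E1};
\item \eqref{E311}.
\end{itemize}
These are pointwise compatible with every value of $e_2(a)$, so they cannot force $e_2(a)=0$ or $c$ constant. A Codazzi equation has to enter. The paper substitutes into \eqref{F33} to obtain \eqref{E0}. Then, with $b^2=c^2-\kappa/2$, it gets the non-trivial equation $72c^4-34\kappa c^2-3\kappa^2=0$.

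In your framework the repair is immediate, because your solution of the linear system is valid even where $a=4c$. Put $c=a/4$ and $e_2(c)=\tfrac14 e_2(a)$ into the fifth line of \eqref{E1}; this gives
\[
\left(9a^4-68\kappa a^2-96\kappa^2\right)e_2(a)=0 .
\]
So on an open set where $a=4c$, there are two possibilities. Either $e_2(a)=0$ there, and then $e_1(a)=0$ by the first line since $ab\neq0$. Or $e_2(a)\neq0$ somewhere, and then $a$ is constant near that point. Both contradict $\operatorname{grad}a\neq0$.
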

\begin{proof}
	In the elimination steps below, when we combine two equations and
eliminate an unknown, we multiply and subtract the equations, where no
division is involved in the elimination itself. When we solve an
equation for an unknown and substitute it into another equation, we
first verify that the corresponding coefficient is non-zero.

It follows from Lemma \ref{le33} that the functions $b$ and $c$ do not vanish at any point of $\mathcal{M}$. Combining \eqref{E310} and \eqref{E311}, we eliminate
$\omega^2_1(e_2)$ and obtain
\begin{align*}
3abe_1(a)+\left(ac+4b^2+4c^2\right)e_2(a)=0,
\end{align*}
which together with \eqref{eq:3.1} gives
\begin{align}\label{F111}
3abe_1(a)+\left(3ac-2(K-\epsilon)\right)e_2(a)=0.
\end{align}
Since $a>0$ and $b\ne0$, we solve \eqref{F111} for $e_1(a)$
and obtain
\begin{align}\label{F12}
e_1(a)=\frac{2(K-\epsilon)-3ac}{3ab}e_2(a).
\end{align}
Since $c\ne0$, substituting \eqref{F12} into \eqref{E310} and
solving for $\omega^2_1(e_2)$, we have
\begin{align}\label{F13}
    \omega_1^2\left(e_2\right)=\frac{  3a^2 + 6ac+4(K-\epsilon)}{6a^2b}e_2(a).
\end{align}
Moreover, differentiating \eqref{eq:3.1} with respect to $e_i$ $(i=1,2)$ provides
\begin{align}\label{F14}
    ce_i(a)-4be_i(b)+(a-4c)e_i(c)=0.
\end{align}
By taking $i=1,2$ in \eqref{F14}, we derive from \eqref{F12} that
\begin{align}
    &c\left(2 (K-\epsilon)  - 3 a c\right)e_2(a) - 12 a b^{2} e_1(b)  + 3ab(a- 4 c)e_1(c) =0,\label{FL1}\\
    &c e_2(a) - 4 b e_2(b)  + (a - 4 c)e_2(c) =0.\label{FL2}
\end{align}
\textbf{Claim.} We have $a-4c\ne0$ at any point of an open, dense subset of $\mathcal{M}$.
    
    Otherwise, we suppose that $a-4c=0$ on an open, connected subset $\mathcal{U}\subset\mathcal{M}$, then it follows that $e_2(a)=4e_2(c)$. By \eqref{F35}, \eqref{F12}, \eqref{F13} and \eqref{F14}, we have
    \begin{align*}
        &\omega_1^2\left(e_1\right)=\frac{1}{4c}e_2(a),\quad
            \omega_1^2\left(e_2\right)=\frac{ 18c^2 +(K-\epsilon)}{24bc^2}e_2(a),\\
            &e_1(a)=\frac{(K-\epsilon)-6c^2}{6bc}e_2(a),\quad
        ce_1(a)=4be_1(b).
    \end{align*}
By the above equations, we derive from \eqref{F33} that
        \begin{align*}
            e_{1} (b)+e_{2}(c)&=\frac{(K-\epsilon)-6c^2}{24b^2}e_2(a)+\frac{1}{4}e_2(a)= \frac{3}{4}e_2(a)-\frac{ 18c^2 +(K-\epsilon)}{8c^2}e_2(a),
    \end{align*}
which leads to
\begin{align}\label{E0}
    \left(6c^4-(K-\epsilon)c^2-42b^2c^2-3(K-\epsilon)b^2\right)e_2(a)=0.
\end{align}
If $e_2(a)=0$ on $\mathcal{U}$, then it follows from $e_1(a)=\frac{(K-\epsilon)-6c^2}{6bc}e_2(a)$ that $e_1(a)=0$, so $\operatorname{grad}a=0$ on $\mathcal{U}$, contradicting $\operatorname{grad}a\ne0$ throughout $\mathcal{U}$.
If $e_2(a)\ne0$ at some point of $\mathcal{U}$, then we choose an open subset of $\mathcal{U}$, still denoted by $\mathcal{U},$ such that $e_2(a)\ne0$  at any point of $\mathcal{U}$. It follows from \eqref{E0}, \eqref{eq:3.1} and $a-4c=0$ that
\begin{align*}
72c^4-34(K-\epsilon)c^2-3(K-\epsilon)^2=0,
\end{align*}
which is a non-trivial polynomial equation concerning $c$ with
constant coefficients. It implies that $a=4c$ is constant on $\mathcal{U}$, so $\operatorname{grad}a=0$ there, contradicting $\operatorname{grad}a\ne0$ throughout $\mathcal{U}$.

The Claim shows that $a-4c\ne0$ on an open, dense subset of $\mathcal{M}$. We restrict to an open, connected subset of this subset, still denoted by $\mathcal{M}$. Thus, $a>0$, $\operatorname{grad}a\ne0$, $b^2c^2>0$ and $a-4c\ne0$ throughout $\mathcal{M}$.

Since $a-4c\ne0$ at any point of $\mathcal{M}$, combining \eqref{F34} with \eqref{F35}, \eqref{F13} and \eqref{FL1}, we eliminate the terms  $e_1(c)$ and we obtain
\begin{align}
	&24 a^{2} b^{2} e_1(b) - 6 a^{2} b (a - 4 c)e_2(b)\label{F319}\\
	&=\left(4(K-\epsilon)\left( a^{2} - 6  a c + 12c^{2} \right)+ 3 a^{4} - 15 a^{3} c - 18 a^{2} b^{2} - 12 a^{2} c^{2} + 72 a b^{2} c + 72 a c^{3}\right)e_2(a).\nonumber
\end{align}
Similarly, combining \eqref{F33} with \eqref{F35}, \eqref{F13} and \eqref{FL2}, we eliminate the terms  $e_2(c)$, and we get
\begin{align}
   2a^2(a - 4 c) e_1(b)+8 a^{2} b e_2(b) +  \left(4 (K-\epsilon)(a - 4c) + 3 a^{3} - 14 a^{2} c\right) e_2(a)=0.\label{F320}
\end{align}
Combining \eqref{F319} with \eqref{F320}, we eliminate $e_2(b)$ and obtain
\begin{align}\nonumber
    P_1e_2(a)+Q_1e_1(b)=0,
\end{align}
where
\begin{align*}
    P_1=P_1(a,c)&=3 a^{3} + 18 a^{2} c -72 a b^{2}  -216 a c^{2} +4 (K-\epsilon)a +288 b^{2} c + 288 c^{3}
    \\&=3 a^{3} + (36 a-144c)\left(ac-2b^2-2c^2\right)-18a^2c+4 (K-\epsilon)a
    \\&=3 a^{3} + (K-\epsilon) (36 a-144c)-18a^2c+4 (K-\epsilon)a
    \\&=3a^3 - 18a^2c+40(K-\epsilon) a -144(K-\epsilon) c ,\\
    Q_1=Q_1(a)&=-6 a^{3} +48 a^{2} c- 96 a b^{2} - 96 a c^{2}\\&
    =-6 a^{3} +48 a\left(ac-2b^2-2c^2\right)\\&
    =6a\left(8(K-\epsilon) -a^2\right).
\end{align*}
By \eqref{eq:3.1}, we find
\[
a^2-8(K-\epsilon)
=(a-4c)^2+16b^2>0
\]
on $\mathcal M$.  
It follows that
\begin{align}\label{F322}
    e_1(b)=\frac{3a^3 - 18a^2c+(K-\epsilon)(40 a -144c)}{6a\left(a^2-8(K-\epsilon) \right)}\, e_2(a).
\end{align}
Combining \eqref{F319} with \eqref{F320}, we eliminate $e_1(b)$ and simplifying by means of
\eqref{eq:3.1} we obtain
\begin{align}\label{F323}
    P_2e_2(a)+Q_2e_2(b)=0,
\end{align}
where
\begin{align*}
    &P_2=P_2(a,c)=  3 a^{5} - 18 a^{4} c + 18 a^{3} c^{2}- 24 (K-\epsilon)^{2}( a - 4  c) -a(K-\epsilon)\left(5  a^{2} + 4  ac - 144 c^{2}\right),\\
    &Q_2=Q_2(a,b)=-6a^{2} b \left(8(K-\epsilon) -a^2\right).
\end{align*}
Since $Q_2=6a^2b\left(a^2-8(K-\epsilon)\right)\ne0,
$
we solve \eqref{F323} for $e_2(b)$ and obtain
\begin{align}\label{F324}
    e_2(b)=\frac{3 a^{5} - 18 a^{4} c + 18 a^{3} c^{2}- 24 (K-\epsilon)^{2}( a - 4  c) -a(K-\epsilon)\left(5  a^{2} + 4  ac - 144 c^{2}\right)}{6a^{2} b\left(8(K-\epsilon) -a^2\right)}\, e_2(a),
\end{align}
Substituting \eqref{F322} into \eqref{F33} and \eqref{F324} into \eqref{F34},
respectively, and using \eqref{F35} and \eqref{F13}, we obtain
\begin{align}
    &e_2(c)=\frac{6 a^{4} - 9 a^{3} c- 10 (K-\epsilon) a^{2} - 72 (K-\epsilon) a c - 48 (K-\epsilon)^{2} }{3 a^{2} \left(8 (K-\epsilon)- a^{2} \right)}\, e_2(a),\label{F325}\\
    &e_1(c)=\frac{ 3 a^{3} c - 9 a^{2} c^{2} - 3 (K-\epsilon) a^{2} + 44 (K-\epsilon) a c - 72 (K-\epsilon) c^{2} - 40 (K-\epsilon)^{2}}{3 a b \left( a^{2}- 8 (K-\epsilon) \right)}\, e_2(a).\nonumber
\end{align}
This completes the proof.
\end{proof}

We will further work on the open, connected subset $\mathcal{M}$ chosen above, where $a=2|H|>0$, $\operatorname{grad}a\ne0$, $b^2c^2>0$ and $a-4c\ne0$  at any point and, finally, we will obtain a contradiction. System~\eqref{E1}, which was obtained using the tangential component of the biharmonic equation, holds on this subset.

The outline of the main part of the proof is as follows: using system~\eqref{E1} and ~\eqref{F3811} and ~\eqref{F3911}, that is the normal component of the biharmonic equation, we first derive two relations involving $e_1e_2(a)$, $e_2e_2(a)$ and $e_2(a)$ (see ~\eqref{G330} and ~\eqref{G331}); eliminating $e_1e_2(a)$ yields an equation in $e_2e_2(a)$ and $e_2(a)$ (see~\eqref{G332}). Then, employing system~\eqref{E1} and the Lie brackets, we obtain two additional equations of the same type (see~\eqref{G329} and ~\eqref{G334}). Together, these form a system with three equations in $e_2e_2(a)$ and $e_2(a)$. Eliminating these variables yields a relation in $a$ and $c$ alone (see~\eqref{G337}). Differentiating this relation with respect to $e_2$ produces a second equation in $a$ and $c$ (see~\eqref{G338}). Eliminating $c$ from these two, it gives a polynomial equation in $a$ with constant coefficients on an open, connected subset of $\mathcal{M}$ which forces $a$ to be constant, thereby yielding the desired contradiction.

Now we start the main part of the proof. Substituting \eqref{F35}, together with the first and last equations of system~\eqref{E1}, into \eqref{F3811} and \eqref{F3911}, and then using the second through fifth equations of system~\eqref{E1}, we obtain
\begin{align}
    &{P}_3e_1e_2(a)+{Q}_3e_2e_2(a)+{R}_3\left(e_2(a)\right)^2+{S}_3=0,\label{G330}\\
    &{P}_4e_1e_2(a)+{Q}_4e_2e_2(a)+{R}_4\left(e_2(a)\right)^2+{S}_4=0.\label{G331}
\end{align}

Note that the coefficients \(P_i,Q_i,R_i,S_i\) \((3\leq i\leq 14)\), which appear in \eqref{G330}, \eqref{G331}, and the following equations, are listed separately in the Appendix 1--13. Interestingly, they are non-zero polynomials in $a$, $b$ or $c$, or non-zero constants.

Combining \eqref{G330} and \eqref{G331}, we eliminate $e_1e_2(a)$. After simplifying by the non-zero factor
$6a^2b\left(a^2-8(K-\epsilon)\right)$ and using \eqref{eq:3.1}, one has
\begin{align}\label{G332}
    {P}_5e_2e_2(a) +{Q}_5 \left(e_2(a)\right)^2+ {R}_5=0.
\end{align}
We have thus obtained an equation \eqref{G332} relating $e_2e_2(a)$ and $e_2(a)$, whose coefficients are polynomials depending only on the functions $a$ and $c$. Next, using the Gauss equation and applying the Lie bracket to the functions $a$ and $b$ respectively, we derive two additional equations of the same type, and then we eliminate the terms $e_2e_2(a)$ and $e_2(a)$ from these three equations.

We first show  that $3a^2+6ac+4(K-\epsilon)$ is non-zero on a suitable open subset. If this factor vanished on an open, connected subset, then
$$c=-\frac{3a^2+4(K-\epsilon)}{6a}.$$
By \eqref{F12} and $\operatorname{grad}a\ne0$, we have $e_2(a)\ne0$ at any point. Differentiating the above expression along $e_2$ and comparing with \eqref{F325} yields
$$9a^4+46(K-\epsilon)a^2-16(K-\epsilon)^2=0,$$
which forces $a$ to be constant on an open, connected subset, contradicting $\operatorname{grad}a\ne0$ there. Thus, after possibly restricting $\mathcal{M}$, we may assume that $3a^2+6ac+4(K-\epsilon)\ne0$ throughout this subset. 

We compute the Lie bracket acting on a smooth function $f$:
\begin{align}\label{LB}
    [e_{1} , e_{2}](f) &= e_{1}e_{2}(f) - e_{2}e_{1}(f)\\
    &= \nabla_{e_{1}} e_{2}(f) - \nabla_{e_{2}} e_{1}(f) \nonumber\\
    &=-\omega_{1}^{2}\left(e_1\right) e_{1}(f)- \omega_{1}^{2}\left(e_2\right)e_{2}(f).\nonumber
\end{align}
Taking $f=a$ in \eqref{LB}, and using \eqref{F12},  \eqref{F13} and  \eqref{F35},
we obtain
\begin{align*}
    e_{1}e_{2}(a) - e_{2}\left(-\frac{3ac-2(K-\epsilon)}{3ab}e_2(a)\right)=\frac{3ac-2(K-\epsilon)}{3a^2b}\left(e_2(a)\right)^2-\frac{ 3a^2 + 6ac+4(K-\epsilon)}{6a^2b}\left(e_2(a)\right)^2.
\end{align*}
From the first, third and fifth equations in \eqref{E1}, it follows that
\begin{align}\label{G328}
{P}_6e_1e_2(a) + {Q}_6 e_2e_2(a) +  {R}_6\left(e_2(a)\right)^2=0.
\end{align}
Taking $f=b$ in \eqref{LB}, and then substituting the second, third, and last equations in system \eqref{E1} and \eqref{F35}, we obtain
\begin{equation*}
    \begin{split}
        & e_1 \left( \frac{3a^5 - 18a^4c + 18a^3c^2 - 24(K - \epsilon)^2(a - 4c) - a(K - \epsilon)\left(5a^2 + 4ac - 144c^2\right)}
        {6a^2b\left(8(K - \epsilon) - a^2\right)} \, e_2(a) \right) \\
        &\quad - e_2 \left( \frac{3a^3 - 18a^2c + (K - \epsilon)(40a - 144c)}{6a\left(a^2 - 8(K - \epsilon)\right)} \, e_2(a) \right) \\
        &\quad + \frac{3a^3 - 18a^2c + (K - \epsilon)(40a - 144c)}{6a^2\left(a^2 - 8(K - \epsilon)\right)} \, \left(e_2(a)\right)^2 \\
        &\quad + \frac{3a^5 - 18a^4c + 18a^3c^2 - 24(K - \epsilon)^2(a - 4c) - a(K - \epsilon)\left(5a^2 + 4ac - 144c^2\right)}
        {6a^2b\left(8(K - \epsilon) - a^2\right)} \\
        &\quad \cdot \frac{3a^2 + 6ac + 4(K - \epsilon)}{6a^2b} \,\left(e_2(a)\right)^2 = 0.
    \end{split}
\end{equation*}
Using the first five equations in  \eqref{E1}, we have
\begin{align}\label{G333}
{P}_7e_1e_2(a) +{Q}_7 e_2e_2(a) +  {R}_7\left(e_2(a)\right)^2=0.
\end{align}
 Substituting the last equation of system \eqref{E1} and \eqref{F35} into the Gauss equation \eqref{F36} and using the first two equations and the fourth equation of system \eqref{E1}, we obtain
\begin{align}\label{G327}
{P}_8e_1e_2(a)+{Q}_8e_2e_2(a)+{R}_8\left(e_2(a)\right)^2+S_8=0.
\end{align}
Combining \eqref{G327} with \eqref{G328} and \eqref{G327} with \eqref{G333}, we eliminate $e_1e_2(a)$. Simplifying the resulting equations by means of \eqref{eq:3.1}, we obtain, respectively,
\begin{align}
    &{P}_{9}e_2e_2(a)+{Q}_{9}\left(e_2(a)\right)^2+R_9=0,\label{G329}\\
    &{P}_{10}e_2e_2(a)+{Q}_{10}\left(e_2(a)\right)^2+R_{10}=0.\label{G334}
\end{align}
Combining \eqref{G329} with \eqref{G332} and \eqref{G334} with \eqref{G332}, we eliminate $e_2e_2(a)$. Simplifying by the non-zero common factors, we obtain
\begin{align}
{P}_{11}\left(e_2(a)\right)^2+{Q}_{11}=0,\label{G335}\\
{P}_{12}\left(e_2(a)\right)^2+{Q}_{12}=0.\label{G336}
\end{align}
After possibly restricting $\mathcal{M}$ to a smaller open, connected subset, we may assume that $P_{11}(a)\ne0$ throughout $\mathcal{M}$. Indeed, $P_{11}(a)=3a^2+8(K-\epsilon)$ is a non-zero polynomial in $a$, and $\operatorname{grad}a\ne0$ on $\mathcal{M}$.

Multiplying \eqref{G336} by $P_{11}$ and subtracting
\eqref{G335} multiplied by $P_{12}$, we obtain
\[
P_{11}Q_{12}-P_{12}Q_{11}=0.
\]
Substituting the expressions for $P_{11}$, $P_{12}$, $Q_{11}$ and
$Q_{12}$ into the above relation and factoring, we obtain
        \begin{equation}\label{P13}
            a^2(a - 4c)\left( ac - 2c^2-(K-\epsilon) \right)\left( 3a^2 + 6ac+4(K-\epsilon)\right)\left({P}_{13}c^2+{Q}_{13}c+{R}_{13}\right)=0.
        \end{equation}
By \eqref{eq:3.1}, we have $ac-2c^2-(K-\epsilon)=2b^2>0$. Moreover, $a>0$, $a-4c\neq0$ and $3a^2+6ac+4(K-\epsilon)\neq0$ on the current open 
subset. Hence all factors preceding $ {P}_{13}c^2+{Q}_{13}c+{R}_{13}$ are non-zero.
It follows that
    \begin{align}\label{G337}
        {P}_{13}c^2+{Q}_{13}c+{R}_{13}=0.
    \end{align}
    Now, we consider whether $K-\epsilon=0$ on $\mathcal{M}$ or not.
    If $K-\epsilon=0$ on $\mathcal{M}$, since $c\neq0$, then \eqref{G337} reduces to
    \begin{align}
        -27a^3c(27 a^{8} - 180 \epsilon a^{6})=0,\nonumber
    \end{align}
    which implies that $a$ is constant on the current open, connected subset, contradicting $\operatorname{grad}a\ne0$ there.
    
    If $K-\epsilon\ne0$, then this holds throughout $\mathcal{M}$, since $K$ is constant. Differentiating \eqref{G337} with respect to $e_2$ and using the fifth equation in \eqref{E1}, we obtain
    \begin{align}\label{E338}   \left({P}_{14}c^2+{Q}_{14}c+{R}_{14}\right)e_2(a)=0.
    \end{align}
    Since $a>0$ and $b\ne0$, equation \eqref{F12}, together with
    $\operatorname{grad}a\ne0$, shows that
    $e_2(a)\ne0$
    throughout $\mathcal M$. Hence,  \eqref{E338} gives
    \begin{align}\label{G338}   {P}_{14}c^2+{Q}_{14}c+{R}_{14}=0.
    \end{align}
    Since $K-\epsilon\ne0$, after possibly restricting $\mathcal{M}$ to a smaller open, connected subset, we may assume that $3a^2-8K-10\epsilon$ and $7a^2-24K-12\epsilon$ are non-zero throughout this subset. This follows from the polynomial argument above. Hence, $P_{13}$ and $P_{14}$ are non-zero on this subset. We view the left-hand sides of \eqref{G337} and \eqref{G338} as polynomials in the variable $c$ and eliminate $c$ by computing their resultant. Since both equations hold on this subset, we have
    \[
        \operatorname{Res}_c\left(P_{13}c^2+Q_{13}c+R_{13},P_{14}c^2+Q_{14}c+R_{14}\right)=0
    \]
    there. 
   Taking the resultant with respect to $c$, we obtain
    \begin{align}
    	-93312a^{12}(K-\epsilon)
    	\left(a^2-8(K-\epsilon)\right)
    	\left(3a^2+8(K-\epsilon)\right)^3
    \,\sum_{i=0}^{10}k_{2i}a^{2i}=0.\nonumber
    \end{align}
  Since $a>0$, $K-\epsilon\ne0$, $a^2-8(K-\epsilon)>0$, and
    $P_{11}\ne0$, the prefactor is non-zero. Hence
    \begin{equation}\label{3.37}
    	\sum_{i=0}^{10}k_{2i}a^{2i}=0.
    \end{equation}
   The coefficients in \eqref{3.37} are given in Appendix 13.
    
    The polynomial in \eqref{3.37} is non-trivial. Indeed, if $k_{20}=0$, then $K=16\epsilon/17$. Substituting this relation into $k_{18}$ shows that $k_{18}$  is a non-zero multiple of $\epsilon^2$ and if $\epsilon=0$, then $K-\epsilon=0$, contrary to the present case. Hence at least one coefficient is non-zero. In particular, if $\epsilon=0$, then
    $K-\epsilon\ne0$ gives $K\ne0$, and
$k_{20}=334611K\ne0.$ Thus, no degeneracy occurs when $\epsilon=0$.
    
Since the polynomial in \eqref{3.37} is non-trivial and has constant
coefficients, $a$ is constant on the current open, connected subset
$\mathcal M$. This contradicts
$\operatorname{grad}a\ne0$ throughout $\mathcal M$.

Therefore, the assumption that $\operatorname{grad}|H|^2$ is
non-zero on an open, connected subset is impossible. Hence
$\operatorname{grad}|H|^2=0
$
on $M^2$. Since $M^2$ is  proper,
$|H|$ is a positive constant. Combining these with Theorem \ref{T5} completes the proof of Theorem \ref {T2}.

\section{Period lattices and global embeddings}

We determine the full period lattices of the two Lagrangian tori
in \eqref{FL} and of their inverse images under the Hopf fibration.
Fix one of the two choices of sign (simultaneously) and set
\begin{equation}\label{eq:lattice-radii}
    r^2=\frac{9\pm\sqrt{41}}{20},\quad
    s^2=\frac{11\mp\sqrt{41}}{40},\quad
    \alpha=\frac{s^2}{r^2},
\end{equation}
where $r,s>0$. Thus $r^2+2s^2=1$. Consider the horizontal Legendrian
immersion
\begin{equation}\label{eq:phase-lift}
    \phi_\pm(u,v)
    =\left(r e^{-\mathrm{i}\alpha(u+v)},\;s e^{\mathrm{i}u},\;s e^{\mathrm{i}v}\right)
    \colon\mathbb R^2\longrightarrow \mathbb{S}^5(1).
\end{equation}
On $\mathbb R^2$ and $\mathbb R^3$, respectively, define the flat metrics
\begin{equation}\nonumber
  g_M=s^2\left(\mathrm{d}u^2+\mathrm{d}v^2\right)
    +\frac{s^4}{r^2}\left(\mathrm{d}u+\mathrm{d}v\right)^2,
    \quad g_{M\times \mathbb{R}}=g_M+\mathrm{d}t^2.
\end{equation}
The same notation will be used for the metrics induced on translation
quotients.

\begin{theorem}\label{thm:global-lattice}
    Let
    \begin{equation}\label{eq:hopf-saturation}
        \Phi_\pm(u,v,t)=e^{-\mathrm{i}t}\phi_\pm(u,v)
        =\left(r e^{-\mathrm{i}(\alpha(u+v)+t)},\;
        s e^{\mathrm{i}(u-t)},\;s e^{\mathrm{i}(v-t)}\right).
    \end{equation}
    The full period lattice of $\Phi_\pm$ is
    $\Lambda_\pm=\mathbb Z\lambda_1\oplus\mathbb Z\lambda_2
    \oplus\mathbb Z\lambda_3$, where
    \begin{equation}\label{eq:full-period-lattice}
        \begin{aligned}
            \lambda_1&=2\pi(-r^2,-r^2,-r^2),\\
            \lambda_2&=2\pi(1-s^2,-s^2,-s^2),\\
            \lambda_3&=2\pi(-s^2,1-s^2,-s^2).
        \end{aligned}
    \end{equation}
    The map $\Phi_\pm$ descends to an isometric embedding
    \[
    \overline\Phi_\pm\colon\left(\mathbb R^3/\Lambda_\pm, g_{M\times \mathbb{R}}\right)
    \longrightarrow \mathbb{S}^5(1)
    \]
    whose image is given  by $\mathbb{S}^1(r)\times \mathbb{S}^1(s)\times \mathbb{S}^1(s)$. Moreover, if $\pi\colon \mathbb{S}^5(1)\to\mathbb{C}P^2(4)$ is the Hopf
    fibration, then $f_\pm=\pi\circ\phi_\pm$ has full period lattice
    \begin{equation}\label{eq:projected-period-lattice}
        \Gamma_\pm
        =2\pi\mathbb Z(1-s^2,-s^2)
        \oplus2\pi\mathbb Z(-s^2,1-s^2).
    \end{equation}
    It descends to an isometric Lagrangian embedding
    \[
    \overline f_\pm\colon\left(\mathbb R^2/\Gamma_\pm,g_M\right)
    \longrightarrow\mathbb{C}P^2(4)
    \]
    whose image is given by
    $\pi\left(\mathbb{S}^1(r)\times \mathbb{S}^1(s)\times \mathbb{S}^1(s)\right)$.
\end{theorem}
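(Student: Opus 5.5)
The plan is to compute the kernel of $\Phi_\pm$ as a homomorphism of abelian groups $\mathbb R^3\to T^3=(\mathbb S^1)^3$ and then identify images and metrics. Write $\Phi_\pm(u,v,t)=(r e^{\mathrm{i}\theta_1},s e^{\mathrm{i}\theta_2},s e^{\mathrm{i}\theta_3})$ with
\[
\theta_1=-\alpha(u+v)-t,\qquad \theta_2=u-t,\qquad \theta_3=v-t .
\]
This is a linear map $L\colon\mathbb R^3\to\mathbb R^3$ followed by the exponential map. Its determinant is $-(1+2\alpha)\neq 0$, so $L$ is a linear isomorphism. Hence $\Phi_\pm$ is a surjective homomorphism onto $\mathbb S^1(r)\times\mathbb S^1(s)\times\mathbb S^1(s)$, and its kernel (the full period lattice) is $\Lambda_\pm=L^{-1}(2\pi\mathbb Z^3)$.

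\textbf{Step 1: the lattice.} I would invert $L$ explicitly, using $r^2+2s^2=1$ and $\alpha=s^2/r^2$, so that $1+2\alpha=1/r^2$. Solving $L(u,v,t)=2\pi(n_1,n_2,n_3)$ gives
\[
t=-r^2\theta_1-s^2(\theta_2+\theta_3),\qquad u=\theta_2+t,\qquad v=\theta_3+t .
\]
Evaluating at the standard basis vectors $2\pi e_1$, $2\pi e_2$, $2\pi e_3$ yields exactly $\lambda_1,\lambda_2,\lambda_3$ as in \eqref{eq:full-period-lattice}. Since $L$ is invertible, these vectors form a $\mathbb Z$-basis of $\Lambda_\pm$. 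Consequently $\overline\Phi_\pm$ is an injective immersion of a compact manifold, hence an embedding, with the stated image.

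\textbf{Step 2: the metrics.} Compute $|\mathrm{d}\Phi_\pm|^2=r^2\,\mathrm{d}\theta_1^2+s^2(\mathrm{d}\theta_2^2+\mathrm{d}\theta_3^2)$ and expand. The cross terms involving $\mathrm{d}t$ have coefficient
\[
2r^2\alpha(\mathrm{d}u+\mathrm{d}v)\,\mathrm{d}t-2s^2(\mathrm{d}u+\mathrm{d}v)\,\mathrm{d}t=0,
\]
and the $\mathrm{d}t^2$ coefficient is $r^2+2s^2=1$. The remaining terms are $s^2(\mathrm{d}u^2+\mathrm{d}v^2)+r^2\alpha^2(\mathrm{d}u+\mathrm{d}v)^2$, which equals $g_M$ since $r^2\alpha^2=s^4/r^2$. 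This also shows that $\phi_\pm$ is horizontal (its $\mathrm{d}t$-cross term vanishes), so $\pi$ restricted to $\phi_\pm$ is isometric onto its image with metric $g_M$. The Lagrangian property is that of a Legendrian lift, which I would recall from Theorem~\ref{T5}.

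\textbf{Step 3: the projected lattice.} This is the main subtlety, because periods of $f_\pm$ are pairs $(u,v)$ for which $\phi_\pm(u,v)=e^{-\mathrm{i}t}\phi_\pm(0,0)$ for some $t$. These are exactly the $(u,v)$ such that $(u,v,t)\in\Lambda_\pm$ for some $t$. Hence $\Gamma_\pm$ is the projection of $\Lambda_\pm$ to the $(u,v)$-plane. This projection is generated by $2\pi(-r^2,-r^2)$, $2\pi(1-s^2,-s^2)$ and $2\pi(-s^2,1-s^2)$. Since $r^2=1-2s^2$, we have
\[
(-r^2,-r^2)=-(1-s^2,-s^2)-(-s^2,1-s^2),
\]
so the first generator is redundant and $\Gamma_\pm$ is as in \eqref{eq:projected-period-lattice}. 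The two remaining vectors are independent because their determinant is $1-2s^2=r^2\ne 0$. Injectivity of $\overline f_\pm$ on $\mathbb R^2/\Gamma_\pm$ follows from the same characterization. The image is $\pi$ of the torus because the Hopf orbit through the image of $\phi_\pm$ sweeps out the image of $\Phi_\pm$, and this completes the plan.
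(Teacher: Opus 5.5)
Your proposal is correct. Steps 1 and 2 coincide with the paper's argument: you invert the phase matrix using $1+2\alpha=1/r^2$, and the $\mathrm{d}t$ cross terms cancel because $r^2\alpha=s^2$.

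The genuine difference is in Step 3. The paper treats $f_\pm$ on its own. It normalizes the first homogeneous coordinate to the constant $r$, so that $f_\pm(z)=f_\pm(w)$ iff $B(z-w)\in 2\pi\mathbb Z^2$ with $B=\begin{pmatrix}1+\alpha&\alpha\\ \alpha&1+\alpha\end{pmatrix}$, and reads off $\Gamma_\pm=2\pi B^{-1}\mathbb Z^2$. You instead observe that $f_\pm(z)=f_\pm(w)$ iff $\Phi_\pm(z,0)=\Phi_\pm(w,t)$ for some $t$. Hence $\Gamma_\pm$ is the projection of $\Lambda_\pm$ to the $(u,v)$-plane, and you discard the projected $\lambda_1$ as redundant using $r^2=1-2s^2$.

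Your route reuses Step 1 instead of inverting a second matrix. It also makes visible that $\mathbb R^3/\Lambda_\pm\to\mathbb R^2/\Gamma_\pm$ is the Hopf fibration restricted to the product torus. The paper's route is independent of $\Lambda_\pm$ and gives the injectivity of $\overline f_\pm$ by a direct, self-contained computation.

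In one spot the paper is more self-contained than your plan. For the Lagrangian property you appeal to Theorem~\ref{T5}. That proof only asserts, without computation, that integrating the system and applying a holomorphic isometry and a reparametrization yields $\phi_\pm$. It is better to check directly, as the paper does, that $\langle \mathrm{i}(\phi_\pm)_u,(\phi_\pm)_v\rangle=0$. This is one line: for any map with components $\rho_j e^{\mathrm{i}\theta_j}$ and $\theta_j$ linear, the quantity is the real part of $\mathrm{i}\sum_j\rho_j^2\,\partial_u\theta_j\,\partial_v\theta_j$, hence zero. Combined with the horizontality you already extracted from the vanishing cross terms, this gives $f_\pm^*\omega=0$.
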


\begin{proof}
    Fix the sign and suppress the subscripts. The three phases in
    \eqref{eq:hopf-saturation} are given by the linear transformation
    \begin{equation}\nonumber
        A=\begin{pmatrix}
            -\alpha&-\alpha&-1\\
            1&0&-1\\
            0&1&-1
        \end{pmatrix},
        \quad \det A=-(1+2\alpha)=-\frac1{r^2}\ne0.
    \end{equation}
    For $j=1,2$, write $X_j=(u_j,v_j,t_j)\in\mathbb R^3$ and set
    \[
    \delta u=u_2-u_1,\quad
    \delta v=v_2-v_1,\quad
    \delta t=t_2-t_1.
    \]
    Now we have
    \begin{equation}\label{eq:phase-coincidences}
        \Phi(X_1)=\Phi(X_2)
        \quad\Longleftrightarrow\quad
        A(X_2-X_1)\in2\pi\mathbb Z^3.
    \end{equation}
    Consequently, the full period lattice is $2\pi A^{-1}\mathbb Z^3$.
    Writing $A(X_2-X_1)=2\pi(m,n,p)^{\mathsf T}$, we obtain
    \[
    -\alpha(\delta u+\delta v)-\delta t=2\pi m,\quad
    \delta u-\delta t=2\pi n,\quad
    \delta v-\delta t=2\pi p
    \]
    for $m,n,p\in\mathbb Z$. Taking into account that $r^2 + 2s^2 = 1$, we get
    \begin{equation}\nonumber
        \begin{aligned}
            \delta u&=2\pi\left(-r^2m+(1-s^2)n-s^2p\right),\\
            \delta v&=2\pi\left(-r^2m-s^2n+(1-s^2)p\right),\\
            \delta t&=-2\pi\left(r^2m+s^2(n+p)\right).
        \end{aligned}
    \end{equation}
    Thus
    \[
    (\delta u,\delta v,\delta t)
    =m\lambda_1+n\lambda_2+p\lambda_3,
    \]
    which proves \eqref{eq:full-period-lattice}. In particular, $\Lambda$
    is a rank-three lattice, and \eqref{eq:phase-coincidences} shows that the induced map $\overline\Phi_\pm\colon \mathbb R^3/\Lambda_\pm
    \longrightarrow \mathbb{S}^5(1)$ is injective.

    A direct calculation gives
    \begin{align*}
        \Phi^*\langle\cdot,\cdot\rangle_{\mathbb{S}^5}
        &=r^2\left(\alpha(\mathrm{d}u+\mathrm{d}v)
        +\mathrm{d}t\right)^2
        +s^2\left(\mathrm{d}u-\mathrm{d}t\right)^2
        +s^2\left(\mathrm{d}v-\mathrm{d}t\right)^2\\
        &=s^2\left(\mathrm{d}u^2+\mathrm{d}v^2\right)
        +\frac{s^4}{r^2}\left(\mathrm{d}u+\mathrm{d}v\right)^2
        +\mathrm{d}t^2=g_{M\times \mathbb{R}},
    \end{align*}
    where the mixed terms involving $\mathrm{d}t$ vanish because
    $r^2\alpha=s^2$. Since the metric $g_{M\times \mathbb{R}}$ factors to the quotient $\mathbb R^3/\Lambda_\pm $, the immersion $\overline\Phi$ is isometric (we keep the same notation for the induced metric).
    Since its domain is compact and it is injective, it is an embedding.
    The invertibility of $A$ also shows that the three phases can be
    prescribed arbitrarily. Its image is therefore the entire product
    $\mathbb{S}^1(r)\times \mathbb{S}^1(s)\times \mathbb{S}^1(s)$. Notice that the generators in
    \eqref{eq:full-period-lattice} satisfy
    \[
    \left(g_{M\times \mathbb{R}}(\lambda_i,\lambda_j)\right)_{i,j=1}^3
    =4\pi^2\operatorname{diag}\left(r^2,s^2,s^2\right).
    \] 
    We next consider $f=\pi\circ\phi$. In homogeneous coordinates,
    \[
    f(u,v)=\left[r:\;s e^{\mathrm{i}((1+\alpha)u+\alpha v)}:
    s e^{\mathrm{i}(\alpha u+(1+\alpha)v)}\right].
    \]
    As the first coordinate is the non-zero constant $r$, two parameter
    values $z,w\in\mathbb R^2$ have the same image precisely when
    \[
    B(z-w)\in2\pi\mathbb Z^2,
    \quad
    B=\begin{pmatrix}1+\alpha&\alpha\\\alpha&1+\alpha\end{pmatrix}.
    \]
    Since $r^2+2s^2=1$, we obtain
    \[
    \det B=1+2\alpha=\frac1{r^2},\quad
    B^{-1}=\begin{pmatrix}1-s^2&-s^2\\-s^2&1-s^2\end{pmatrix}.
    \]
    Hence the full period lattice of $f$ is $2\pi B^{-1}\mathbb Z^2$,
    which is exactly \eqref{eq:projected-period-lattice}; the induced map
    $\overline f$ is injective.

    With respect to the real Euclidean inner product on $\mathbb C^3$,
    \[
    \langle\phi_u,\mathrm{i}\phi\rangle
    =\langle\phi_v,\mathrm{i}\phi\rangle=-r^2\alpha+s^2=0,
    \quad \langle \mathrm{i}\phi_u,\phi_v\rangle=0.
    \]
    Thus $\phi$ is horizontal and $f^*\omega=0$, where $\omega$ is the
    K\"ahler form of $\mathbb{C}P^2(4)$. Since the Hopf fibration is a
    Riemannian submersion,
    \[
    f^*\langle\cdot,\cdot\rangle_{\mathbb{C}P^2(4)}=\phi^*\langle\cdot,\cdot\rangle_{\mathbb{S}^5}=g_M.
    \]
    It follows that $\overline f$ is an isometric Lagrangian immersion.
    Compactness and injectivity imply that it is an embedding. Finally,
    $\pi\circ\Phi(u,v,t)=f(u,v)$.
    The constant coefficients of $g_M$ show that this torus is flat.
\end{proof}

The phase coordinates in \eqref{eq:phase-lift} are not orthonormal.
The following equivalent formulation gives the period lattice in the standard Euclidean coordinates of $\mathbb R^3$.

\begin{corollary}
    Let $c$ and $\kappa$ be the real numbers defined by
    \[
    c=\pm\sqrt{\frac{7\pm\sqrt{41}}{4}},
    \quad \kappa=\sqrt{1+c^2},
    \]
    where the sign is taken simultaneously.
 Set
    \begin{equation}\nonumber
        \begin{split}
            \widehat\Phi_\pm(x,y,t)
            &=\Phi_\pm(cx+\kappa y,cx-\kappa y,t)\\
            &=\left(r e^{-\mathrm{i}(x/c+t)},\;
            s e^{\mathrm{i}(cx+\kappa y-t)},\;
            s e^{\mathrm{i}(cx-\kappa y-t)}\right).
        \end{split}
    \end{equation}
    Then $\widehat\Phi_\pm$ induces an isometric embedding
    \[
    \left(\mathbb R^3/\widehat\Lambda_\pm,
    \mathrm{d}x^2+\mathrm{d}y^2+\mathrm{d}t^2\right)
    \longrightarrow \mathbb{S}^5(1),
    \]
    where $\widehat\Lambda_\pm
    =\mathbb Z\ell_1\oplus\mathbb Z\ell_2\oplus\mathbb Z\ell_3$ and
    \begin{equation}\label{eq:euclidean-period-lattice}
        \ell_1=\frac{\pi}{1+c^2}(-2c,0,-2c^2),\quad
        \ell_2=\frac{\pi}{1+c^2}(c,\kappa,-1),\quad
        \ell_3=\frac{\pi}{1+c^2}(c,-\kappa,-1).\nonumber
    \end{equation}
    Its image is $\mathbb{S}^1(r)\times \mathbb{S}^1(s)\times \mathbb{S}^1(s)$.
\end{corollary}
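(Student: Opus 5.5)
The plan is to deduce the statement from Theorem~\ref{thm:global-lattice} by a linear change of parameters. Define the linear isomorphism
\[
L\colon\mathbb R^3\to\mathbb R^3,\qquad L(x,y,t)=(cx+\kappa y,\;cx-\kappa y,\;t),
\]
so that $\widehat\Phi_\pm=\Phi_\pm\circ L$. Since $\det L=-2c\kappa\neq0$, $\widehat\Phi_\pm(X_1)=\widehat\Phi_\pm(X_2)$ holds if and only if $L(X_2-X_1)\in\Lambda_\pm$. Hence the full period lattice of $\widehat\Phi_\pm$ is $\widehat\Lambda_\pm=L^{-1}\Lambda_\pm$. The induced map on $\mathbb R^3/\widehat\Lambda_\pm$ is $\overline\Phi_\pm\circ\overline L$, where $\overline L\colon\mathbb R^3/\widehat\Lambda_\pm\to\mathbb R^3/\Lambda_\pm$ is a diffeomorphism. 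It is therefore an embedding with the same image $\mathbb S^1(r)\times\mathbb S^1(s)\times\mathbb S^1(s)$.

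First I will relate $c$ to $r,s,\alpha$. The value of $c$ in the corollary is exactly the constant found in the proof of Theorem~\ref{T5}, where the Legendrian lift has moduli $\sqrt{c^2/(c^2+1)}$ and $\sqrt{1/(2(c^2+1))}$ after the holomorphic isometry. So I expect
\[
r^2=\frac{c^2}{1+c^2},\qquad s^2=\frac{1}{2(1+c^2)},\qquad \alpha=\frac{s^2}{r^2}=\frac1{2c^2}.
\]
I will verify this directly by substituting $c^2=(7\pm\sqrt{41})/4$ into \eqref{eq:lattice-radii}. For the upper sign, $c^2/(1+c^2)=(7+\sqrt{41})/(11+\sqrt{41})$, and rationalizing gives $(36+4\sqrt{41})/80=(9+\sqrt{41})/20$. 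The lower sign is analogous, and $s^2=(1-r^2)/2$ then follows. The relation $\alpha=1/(2c^2)$ gives $\alpha(u+v)=2\alpha c\,x=x/c$, which confirms the displayed formula for $\widehat\Phi_\pm$. The sign of $c$ must be taken consistently so that $x/c$ has the right sign.

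Next I will compute the pulled-back metric. Under $L$ we have $\mathrm du+\mathrm dv=2c\,\mathrm dx$ and $\mathrm du^2+\mathrm dv^2=2c^2\mathrm dx^2+2\kappa^2\mathrm dy^2$, and the mixed $\mathrm dx\,\mathrm dy$ terms cancel. Therefore
\[
L^*g_{M\times\mathbb R}=\Bigl(2c^2s^2+\tfrac{4c^2s^4}{r^2}\Bigr)\mathrm dx^2+2\kappa^2s^2\,\mathrm dy^2+\mathrm dt^2 .
\]
Using the relations above, the first coefficient is $\frac{c^2}{1+c^2}+\frac{1}{1+c^2}=1$ and the second is $\frac{\kappa^2}{1+c^2}=1$. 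So $L$ is an isometry from the Euclidean metric onto $g_{M\times\mathbb R}$.

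Finally I will compute $\ell_j=L^{-1}\lambda_j$ using
\[
x=\frac{\delta u+\delta v}{2c},\qquad y=\frac{\delta u-\delta v}{2\kappa},\qquad t=\delta t.
\]
For $\lambda_2$ this gives $x=\pi(1-2s^2)/c=\pi r^2/c=\pi c/(1+c^2)$, $y=\pi/\kappa=\pi\kappa/(1+c^2)$ and $t=-2\pi s^2=-\pi/(1+c^2)$, which is $\ell_2$. For $\lambda_3$ only $y$ changes sign, giving $\ell_3$. For $\lambda_1$ we get $y=0$, $x=-2\pi r^2/c=-2\pi c/(1+c^2)$ and $t=-2\pi r^2=-2\pi c^2/(1+c^2)$, which is $\ell_1$.

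The only step needing care is this identification of $r,s,\alpha$ with the given $c$, including the simultaneous choice of signs. Everything else is direct linear algebra resting on Theorem~\ref{thm:global-lattice}.
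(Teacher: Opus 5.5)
Your proposal is correct and matches the paper's proof: express $r^2,s^2,\alpha$ in terms of $c$, note that $\widehat\Phi_\pm=\Phi_\pm\circ T$ where $T$ pulls $g_{M\times\mathbb R}$ back to the Euclidean metric, and read off $\widehat\Lambda_\pm=T^{-1}\Lambda_\pm$ from Theorem~\ref{thm:global-lattice}. One small correction: $\alpha=1/(2c^2)$ depends only on $c^2$, so $\alpha(u+v)=x/c$ holds for either sign of $c$, and only the sign inside $7\pm\sqrt{41}$ has to agree with the choice made for $r$ and $s$.
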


\begin{proof}
    The radii in \eqref{eq:lattice-radii} satisfy
    \[
    r^2=\frac{c^2}{1+c^2},\quad
    s^2=\frac1{2(1+c^2)},\quad \alpha=\frac1{2c^2}.
    \]
    The invertible linear change of variables
    \[
    T(x,y,t)=(cx+\kappa y,cx-\kappa y,t)
    \]
    therefore gives
    $T^*g_{M\times \mathbb{R}}=\mathrm{d}x^2+\mathrm{d}y^2+\mathrm{d}t^2$ and
    $\widehat\Phi_\pm=\Phi_\pm\circ T$.
    Applying $T^{-1}$ to the three generators in
    \eqref{eq:full-period-lattice} yields precisely $\ell_1,\ell_2,\ell_3$.
    Thus $\widehat\Lambda_\pm=T^{-1}\Lambda_\pm$, and the conclusion
    follows from Theorem~\ref{thm:global-lattice}. In particular, these
    generators are pairwise orthogonal in the Euclidean metric and satisfy
    \[
    |\ell_1|=2\pi r,\quad |\ell_2|=|\ell_3|=2\pi s.
    \]
\end{proof}

\begin{remark}
    The lift $\phi_\pm$ is singly periodic, whereas its Hopf projection
    $\pi\circ\phi_\pm$ is doubly periodic. Since
    $\alpha=(7\mp\sqrt{41})/4$ is irrational, the periods of $\phi_\pm$
    are precisely $(2\pi m,-2\pi m)$, $m\in\mathbb Z$. Thus $\phi_\pm$
    descends to an injective isometric Legendrian immersion of the
    complete flat cylinder
    \[
    \left(\mathbb R^2/L,g_M\right),
    \quad L=\mathbb Z(2\pi,-2\pi).
    \]
    This cylinder is horizontal with respect to the Hopf
    fibration, since
    \[
    \langle(\phi_\pm)_u,\mathrm{i}\phi_\pm\rangle
    =\langle(\phi_\pm)_v,\mathrm{i}\phi_\pm\rangle=0.
    \]
    In contrast, $\partial_t\Phi_\pm=-\mathrm{i}\Phi_\pm$ is tangent to the Hopf
    fibres. The cylinder immersion is not an embedding. Its image is
    dense in $\mathbb{S}^1(r)\times \mathbb{S}^1(s)\times \mathbb{S}^1(s)$, as follows from the
    irrationality of $\alpha$.
\end{remark}

\section*{Appendix. Explicit formulas for the coefficients arising in \\ the proof of Theorem \ref{T2}}
\label{Appendix}

\subsection*{Appendix 1.} The coefficients in \eqref{G330} are given by
\begin{align*}
	&{P}_3={P}_3(a,b)=36 a^{3} b^{3} \left(a^{2} - 8 (K-\epsilon)\right),\\
	&{Q}_3={Q}_3(a,b,c)=6 a^{2} b^{2} \left(a^{2} - 8 (K-\epsilon)\right)\left(3 a^{2} +6 a c +4 (K-\epsilon)\right),\\
	&{R}_3={R}_3(a,b,c)=9 a^{7} - 36 a^{6} c -36 a^{5} b^{2} - 54 a^{5} c^{2} -3 a^{5} (K-\epsilon) + 108 a^{4} b^{2} c + 108 a^{4} c^{3} \\&\,\,\,\quad\quad\quad\quad- 114 a^{4} c(K-\epsilon)- 144 a^{3} b^{2} (K-\epsilon)+ 480 a^{3} c^{2}(K-\epsilon) - 92 a^{3} (K-\epsilon)^{2} \\&\,\,\,\quad\quad\quad\quad+864 a^{2} b^{2} c (K-\epsilon) +864 a^{2} c^{3} (K-\epsilon) + 128 a^{2} c (K-\epsilon)^{2} +384 a b^{2} (K-\epsilon)^{2} \\&\,\,\,\quad\quad\quad\quad+ 1152 a c^{2} (K-\epsilon)^{2} -96 a (K-\epsilon)^{3} + 384 c (K-\epsilon)^{3},\\
	&{S}_3={S}_3(a,b)=-36 a^{5} b^{4} \left(a^{2} - 8(K-\epsilon)\right).
\end{align*}

\subsection*{Appendix 2.} The coefficients in \eqref{G331} are given by
\begin{align*}
	&   {P}_4={P}_4(a,b,c)=12 a^2 b \left(a^{2} -8 (K-\epsilon)\right) \left(3 a c -2 (K-\epsilon)\right),\\
	&{Q}_4={Q}_4(a,b)=-36 a^{3} b^{2}\left(a^{2} - 8 (K-\epsilon)\right),\\
	&{R}_4={R}_4(a,b,c)=9 a^{6} +72 a^{5} c + 72 a^{4} b^{2} + 72 a^{4} c^{2} -84 a^{4} (K-\epsilon)-192 a^{3} c(K-\epsilon)\\&\,\,\,\quad\quad\quad\quad-576 a^{2} b^{2} (K-\epsilon)-576 a^{2} c^{2} (K-\epsilon)-400 a^{2}(K-\epsilon)^{2} - 768 a c (K-\epsilon)^{2} \\&\,\,\,\quad\quad\quad\quad-128 (K-\epsilon)^{3},\\
	&{S}_4={S}_4(a,b,c)=36a^{4} b^{2} \left(a^{2} - 8(K-\epsilon)\right) \left(a^{2} - 2 a c+ 2 b^{2} +2 c^{2} -5\epsilon\right).
\end{align*}

\subsection*{Appendix 3.} The coefficients in \eqref{G332} are given by
\begin{align*}
	&{P}_5={P}_5(a,c)=6 a^{2} \left(a^{2} - 8(K-\epsilon)\right) \left(a c - 2 c^{2} -(K-\epsilon)\right) \left(18 a^{3} c -15 a^{2}(K-\epsilon) - 8 (K-\epsilon)^{2}\right),\\
	&{Q}_5={Q}_5(a,c)=864a^4c^3\left(a^2 +4(K -\epsilon)\right)-6ac^2\left(99a^6 +144a^4(K-\epsilon) +544 a^2(K-\epsilon)^2 \right.\\\left.\right.&\left.\,\,\,\quad\quad\quad\quad+ 256(K-\epsilon)^3\right)+c\left(3a^2 +4(K-\epsilon)\right)\left(9a^6 + 318a^4(K-\epsilon)  -512a^2(K-\epsilon)^2 \right.\\\left.\right.&\left.\,\,\,\quad\quad\quad\quad- 384(K-\epsilon)^3\right)-a(K-\epsilon)\left(9a^6 + 420a^4(K-\epsilon) + 256a^2(K-\epsilon)^2 - 768(K-\epsilon)^3\right),\\
	&{R}_5={R}_5(a,c)=-18 a^{5} \left(3 a^{2} -5K-10\epsilon\right) \left(a^{2} - 8(K-\epsilon)\right) \left(a c - 2 c^{2} -(K-\epsilon)\right)^{2}.
\end{align*}

\subsection*{Appendix 4.} The coefficients in \eqref{G328} are given by
\begin{align*}
	&{P}_6={P}_6(a,b)=18 a^{3} b^{3}\left(a^{2} - 8(K-\epsilon)\right),\\
	&{Q}_6={Q}_6(a,b,c)=6 a^{2} b^{2}\left(a^{2} - 8 (K-\epsilon)\right)\left(3 a c - 2 (K-\epsilon)\right),\\
	&{R}_6={R}_6(a,b,c)=9 a^{6} c - 27 a^{5} b^{2} - 54 a^{5} c^{2} - 6 a^{5} (K-\epsilon)+ 54 a^{4} b^{2} c + 54 a^{4} c^{3} +21 a^{4} c(K-\epsilon)  \\&\,\,\,\quad\quad\quad\quad+24 a^{3} b^{2}(K-\epsilon)- 48 a^{3} c^{2} (K-\epsilon) + 10 a^{3} (K-\epsilon)^{2} + 432 a^{2} b^{2} c (K-\epsilon)\\&\,\,\,\quad\quad\quad\quad+ 432 a^{2} c^{3} (K-\epsilon)- 64 a^{2} c (K-\epsilon)^{2} + 48 a (K-\epsilon)^{3} - 192 c (K-\epsilon)^{3}.
\end{align*}

\subsection*{Appendix 5.} The coefficients in \eqref{G333} are given by
\begin{align*}
	&{P}_7={P}_7(a,b,c)= 3 a^{2} b\left(a^{2} -8(K-\epsilon)\right) \left(3 a^{5} -18 a^{4} c +18 a^{3} c^{2} - 24(K-\epsilon)^{2} (a-4c) \right.\\\left.\right.&\,\,\,\quad\quad\quad\quad\left.- a(K-\epsilon)\left(5 a^{2} +4 a c -144  c^{2}\right)\right),\\
	&{Q}_7={Q}_7(a,b,c)=3 a^{3} b^{2}\left(a^{2} -8(K-\epsilon)\right)\left( 3 a^{3} - 18 a^{2}c + (K-\epsilon)(40a- 144 c)\right),\\
	&{R}_7={R}_7(a,b,c)=- 27 a^{8} c + 99 a^{7} b^{2} + 54 a^{7} c^{2} - 30 a^{7} (K-\epsilon) - 54 a^{6} b^{2} c - 54 a^{6} c^{3} -87 a^{6} c (K-\epsilon)\\&\,\,\,\quad\quad\quad\quad+ 252 a^{5} b^{2} (K-\epsilon) + 1404 a^{5} c^{2} (K-\epsilon) + 700 a^{5} (K-\epsilon)^{2} - 864 a^{4} b^{2} c(K-\epsilon)\\&\,\,\,\quad\quad\quad\quad- 864 a^{4} c^{3} (K-\epsilon)  - 2056 a^{4} c (K-\epsilon)^{2}- 1344 a^{3} b^{2} (K-\epsilon)^{2} + 8448 a^{3} c^{2} (K-\epsilon)^{2} \\&\,\,\,\quad\quad\quad\quad+ 896 a^{3} (K-\epsilon)^{3} - 17280 a^{2} b^{2} c (K-\epsilon)^{2} - 17280 a^{2} c^{3} (K-\epsilon)^{2} - 11456 a^{2} c (K-\epsilon)^{3}  \\&\,\,\,\quad\quad\quad\quad- 6912 a b^{2} (K-\epsilon)^{3}- 6912 a c^{2} (K-\epsilon)^{3} - 3840 a(K-\epsilon)^{4} + 1536 c(K-\epsilon)^{4}.
\end{align*}

\subsection*{Appendix 6.} The coefficients in \eqref{G327} are given by
\begin{align*}
	&{P}_8={P}_8(a,b,c)=3 a^{2} b\left(a^{2} -8 (K-\epsilon)\right) \left(3 a^{2} + 6 a c +4 (K-\epsilon)\right),\\
	&{Q}_8={Q}_8(a,b)=-18 a^{3} b^{2} \left(a^{2} -8 (K-\epsilon)\right) ,\\
	&{R}_8={R}_8(a,b,c)=2 \left(27 a^{5} c+18 a^{4} b^{2} + 18 a^{4} c^{2} -54 a^{4} (K-\epsilon)+ 144 a^{3} c (K-\epsilon)-144 a^{2} b^{2} (K-\epsilon)\right.\\\left.\right.&\,\,\,\quad\quad\quad\quad\left.- 144 a^{2} c^{2} (K-\epsilon) - 212 a^{2} (K-\epsilon)^{2} +32 (K-\epsilon)^{3}\right),\\
	&S_8={S}_8(a,b)=18 a^{4} b^{2} K \left(a^{2} - 8(K-\epsilon)\right).
\end{align*}

\subsection*{Appendix 7.} The coefficients in \eqref{G329} are given by
\begin{align*}
	&   {P}_{9}={P}_{9}(a,c)= 6 a^{2} \left(a^{2} -8(K-\epsilon)\right) \left(a c - 2 c^{2} -(K-\epsilon)\right)\left(18 a^{3} c - 15 a^{2}(K-\epsilon) -8 (K-\epsilon)^{2}\right),\\
	&   {Q}_{9}={Q}_{9}(a,c)=864 a^{4}c^{3}\left(a^{2}  + 4(K-\epsilon)\right) -6ac^2\left(27a^2 + 8(K-\epsilon)\right)\left(3a^4 + 8a^2(K-\epsilon)\right.\\\left.\right.&\left.\,\,\,\quad\quad\quad\quad+ 32(K-\epsilon)^2\right)-c\left(27a^8 - 1278a^6(K-\epsilon) - 888a^4(K-\epsilon)^2  +3200a^2\right.\\\left.\right.&\left.\,\,\,\quad\quad\quad\quad\cdot(K-\epsilon)^3+ 1536(K-\epsilon)^4\right)+a(K-\epsilon)\left(45a^6 - 708a^4(K-\epsilon)\right.\\&\left.\,\,\,\quad\quad\quad\quad- 1408a^2(K-\epsilon)^2+768(K-\epsilon)^3\right), \\
	&R_9={R}_{9}(a,c)=- 54 a^{5} K\left(a^{2} - 8 (K-\epsilon)\right) \left(a c - 2 c^{2} - (K-\epsilon)\right)^{2}.
\end{align*}

\subsection*{Appendix 8.} The coefficients in \eqref{G334} are given by
\begin{align*}
	&   {P}_{10}={P}_{10}(a,c)= 3 a^{2} \left( 8 (K-\epsilon)-a^{2} \right) \left(a c - 2 c^{2} -(K-\epsilon)\right) \left(27 a^{4} - 144 a^{3} c + 2a(K-\epsilon)\right.\\\left.\right.&\left.\,\,\,\quad\quad\quad\quad\cdot (51a-144c)+ 16 (K-\epsilon)^{2}\right),\\
	&   {Q}_{10}={Q}_{10}(a,c)=3456a^2c^3\left(a^4 + 5a^2(K - \epsilon) + 8(K - \epsilon)^2\right)-6ac^2\left(405a^6 + 2472a^4(K - \epsilon)\right.\\\left.\right.&\left.\,\,\,\quad\quad\quad\quad+ 3136a^2(K - \epsilon)^2- 2560(K - \epsilon)^3 \right)+
	c\left(297 a^{8} + 5058a^6(K - \epsilon)+ 12552 \right.\\\left.\right.&\left.\,\,\,\quad\quad\quad\quad\cdot a^4(K - \epsilon)^2+6016 a^2(K - \epsilon)^3 - 8704(K - \epsilon)^4\right)-a(K - \epsilon)\left(279a^6 \right.\\\left.\right.&\left.\,\,\,\quad\quad\quad\quad+ 3228a^4(K - \epsilon)+ 4768a^2(K - \epsilon)^2- 2560(K - \epsilon)^3\right) ,\\
	&R_{10}={R}_{10}(a,c)=18 a^{2} K \left(a^{2} - 8 (K-\epsilon)\right) \left(a c - 2 c^{2} - (K-\epsilon)\right) \left(3 a^{5} - 18 a^{4} c + 18 a^{3} c^{2} \right.\\\left.\right.&\left.\,\,\,\quad\quad\quad\quad-24 (a-4c) (K-\epsilon)^{2}-  a(K-\epsilon)\left(5 a^{2}+4 a c -144  c^{2} \right)  \right).
\end{align*}

\subsection*{Appendix 9.} The coefficients in \eqref{G335} are given by
\begin{align*}
	&{P}_{11}={P}_{11}(a)= 3 a^{2} + 8(K-\epsilon),\\
	&   {Q}_{11}={Q}_{11}(a,c)= -a^{2}\left(3 a^{2}-8K -10 \epsilon\right) \left(a c - 2 c^{2} -(K-\epsilon)\right).\\
\end{align*}
\subsection*{Appendix 10.} The coefficients in \eqref{G336} are given by
\begin{align*}
		&   {P}_{12}={P}_{12}(a,c)= 124416 a^{7} c^{4} (K-\epsilon)-c^{3}\left(21384 a^{10} - 160704 a^{8} (K-\epsilon)-165888 a^{6} (K-\epsilon)^{2} \right.\\\left.\right.&\left.\,\,\,\quad\quad\quad\quad+ 663552 a^{4} (K-\epsilon)^{3}\right)  + c^{2} \left(9234 a^{11}- 20736 a^{9} (K-\epsilon)- 502848 a^{7} (K-\epsilon)^{2} \right.\\\left.\right.&\left.\,\,\,\quad\quad\quad\quad- 1166976 a^{5} (K-\epsilon)^{3} - 460800 a^{3} (K-\epsilon)^{4} - 172032 a (K-\epsilon)^{5}\right) -c \left( 729 a^{12} \right.\\\left.\right.&\left.\,\,\,\quad\quad\quad\quad+ 11826 a^{10} (K-\epsilon)- 115344 a^{8} (K-\epsilon)^{2} - 568800 a^{6} (K-\epsilon)^{3} - 698112 a^{4} (K-\epsilon)^{4}\right.\\\left.\right.&\left.\,\,\,\quad\quad\quad\quad- 264192 a^{2} (K-\epsilon)^{5}+ 114688(K-\epsilon)^{6}\right) + 243 a^{11} (K-\epsilon)+ 3888 a^{9} (K-\epsilon)^{2} \\&\,\,\,\quad\quad\quad\quad- 51408 a^{7}(K-\epsilon)^{3} - 182592 a^{5} (K-\epsilon)^{4} -73728 a^{3} (K-\epsilon)^{5} + 28672 a(K-\epsilon)^{6} , \\
	& {Q}_{12}={Q}_{12}(a,c)=18 a^{2} \left(a^{2} - 8 (K-\epsilon)\right) \left(a c - 2 c^{2} - (K-\epsilon)\right)\left(72a^4c^3\left(12a^4- a^2(5K + 64\epsilon) \right.\right.\\\left.\left.\right.\right.&\left.\left.\quad\quad\quad\quad-16(K-\epsilon)(7K + 5\epsilon)\right)-2ac^2\left(297a^8 - 27a^6(3K + 64\epsilon) - 12a^4(K-\epsilon)(127K \right.\right.\\\left.\left.\right.\right.&\left.\left.\quad\quad\quad\quad+ 209\epsilon) -16a^2(K-\epsilon)^2(41K+ 10\epsilon)-1152K(K-\epsilon)^3 \right)+c\left(81a^{10} + 9a^8(55K\right.\right.\\\left.\left.\right.\right.&\left.\left.\quad\quad\quad\quad - 112\epsilon)- 6a^6(K-\epsilon)(113K + 562\epsilon) - 152a^4(K-\epsilon)^2(7K + 20\epsilon)+2816a^2K\right.\right.\\\left.\left.\right.\right.&\left.\left.\quad\quad\quad\quad\cdot(K-\epsilon)^3+1536K(K-\epsilon)^4\right)-a(K-\epsilon)\left(81a^8 +9 a^6(9K - 64\epsilon) - 12a^4(K-\epsilon)\right.\right.\\\left.\left.\right.\right.&\left.\left.\quad\quad\quad\quad\cdot(30K+ 89\epsilon)+ 80a^2(K-\epsilon)^2(9K - 2\epsilon) +384K(K-\epsilon)^3 \right)\right).
\end{align*}

\subsection*{Appendix 11.} The coefficients in \eqref{P13} are given by
\begin{align*}
	&{P}_{13}={P}_{13}(a)=5184 a^{6} (K-\epsilon) \left( 3 a^{2} - 8 K -10 \epsilon\right),\\
	&{Q}_{13}={Q}_{13}(a)=- 27 a^{3} \left(27 a^{8} - 90a^{6} \left(5 K - 3 \epsilon\right) +48 a^{4}(K-\epsilon) (68K - 15\epsilon)\right.\\\left.\right.&\left.\quad\quad\quad- 128a^{2} (K-\epsilon) ^2(K + 98\epsilon) - 2048(K-\epsilon) ^3(11K + 10\epsilon)\right),\\
	&{R}_{13}={R}_{13}(a)=(K-\epsilon) \left(1215 a^{10} - 3726a^{8}(3K -\epsilon) + 1080a^{6} (K-\epsilon)(29K+ 22\epsilon)\right.\\\left.\right.&\left.\quad\quad\quad+ 1440a^{4} (14K-45\epsilon) (K-\epsilon) ^2 - 4608a^{2}(K-\epsilon) ^3(79K + 37\epsilon)\right.\\\left.\right.&\left.\quad\quad\quad-2048(26K-35\epsilon)(K-\epsilon) ^4\right).
\end{align*}

\subsection*{Appendix 12.} The coefficients in \eqref{E338} are given by
\begin{align*}
	&   {P}_{14}={P}_{14}(a)= 1728a^6(K -\epsilon)\left(7a^2 - 24K - 12\epsilon\right),\\
	&{Q}_{14}={Q}_{14}(a)= -3 a^{3} \left(189 a^{8} - 36a^{6}(67K- 37\epsilon) + 48a^{4}(K -\epsilon)(461K-250\epsilon)\right.\\\left.\right.&\left.\quad\quad\quad\quad- 1536a^{2} (K -\epsilon)^2(37K+ 15\epsilon) - 1024(K -\epsilon)^3(41K- 68\epsilon)\right),\\
	&{R}_{14}={R}_{14}(a)=81 a^{12} - 270a^{10} \left(3K- \epsilon\right) + 18a^{8} (K -\epsilon)(57K +233\epsilon)\\&\quad\quad\quad\quad+ 120a^{6}(K -\epsilon)^2(369K -382\epsilon) - 320a^{4} (K -\epsilon)^3(701K +185\epsilon)\\& \quad\quad\quad\quad+ 512a^{2}(K -\epsilon)^4(77K +976\epsilon) +4096(K -\epsilon)^5(211K + 157\epsilon).
\end{align*}

\subsection*{Appendix 13.} The coefficients in \eqref{3.37} are given by
\begin{align*}
	k_{20}=&19683 \left(17 K - 16 \epsilon\right),\\
	k_{18}=& - 6561 \left(1811 K^{2} - 2171 K\epsilon + 400 \epsilon^{2}\right),\\
	k_{16}=& 34992 \left(3318 K^{3} - 1508 K^{2} \epsilon - 4291 K\epsilon^{2} + 2506 \epsilon^{3}\right),\\
	k_{14}=&2916 (K-\epsilon) \left( 142009 K^{3} - 1180586 K^{2} \epsilon + 1061637 K\epsilon^{2} - 148960 \epsilon^{3}\right),\\
	k_{12}=& -3888 (K-\epsilon)\left( 5919017 K^{4} - 18458574 K^{3} \epsilon + 16836043 K^{2} \epsilon^{2} \right.\\\left.\right.&\left.-4266198 K \epsilon^{3} - 156288 \epsilon^{4}\right),\\
	k_{10}=& 5184 (K-\epsilon)^{2}\left( 57604244 K^{4} - 116496817 K^{3} \epsilon + 57730646 K^{2} \epsilon^{2} \right.\\\left.\right.&\left.+ 2245911 K \epsilon^{3} - 3066324 \epsilon^{4}\right),\\
	k_ {8} =&-6912(K-\epsilon)^{3}\left( 312832540 K^{4} - 334354007 K^{3} \epsilon - 91616875 K^{2} \epsilon^{2} \right.\\\left.\right.&\left.+ 145888066 K \epsilon^{3} - 24716198 \epsilon^{4}\right),\\
	k_{6}=& 73728 (K-\epsilon)^{4} \left(128999366 K^{4} - 2898332 K^{3} \epsilon - 120439017 K^{2} \epsilon^{2} \right.\\\left.\right.&\left.+ 14370652 K \epsilon^{3} + 14534810 \epsilon^{4}\right),\\
	k_{4}=&-98304 (K-\epsilon)^{5}\left(256980158 K^{4} + 284386295 K^{3} \epsilon - 116487255 K^{2} \epsilon^{2} \right.\\\left.\right.&\left.- 156148160 K \epsilon^{3} + 12326650 \epsilon^{4}\right),\\
	k_{2}=&262144 (K-\epsilon)^{6} \left( 142993244 K^{4} + 326276185 K^{3} \epsilon+ 203016900 K^{2} \epsilon^{2} \right.\\\left.\right.&\left.- 35598275 K \epsilon^{3} - 53542000 \epsilon^{4}\right),\\
	k_{0}=&-1048576 (K -\epsilon )^{7} \left(22958636 K^{4} + 79204723 K^{3} \epsilon + 109786359 K^{2} \epsilon^{2}
	\right.\\\left.\right.&\left.+ 71001250 K \epsilon^{3} + 18083750 \epsilon^{4}\right).
\end{align*}    

\vspace{1cm}

\noindent{\bf Acknowledgement:} {The work was partially supported by NSFC (No.12571051),  Natural Science Foundation of Liaoning Province (No.2025-MS-285).}
\bibliographystyle{amsplain}

\end{document}